 \theoremstyle{plain}
  \newtheorem{theorem}{Theorem}[section]
  \newtheorem{proposition}[theorem]{Proposition}
  \newtheorem{lemma}[theorem]{Lemma}  
  \newtheorem{definition}[theorem]{Definition}
  \newtheorem{remark}[theorem]{Remark}
 \newtheorem{example}[theorem]{Example}
\newtheorem{condition}{Condition}
\newcommand{\ep}{{\mathbb {E}}}
\newcommand{\Z}{{\mathbb{Z}}}
\newcommand{\re}{{\mathbb{R}}}
\newcommand{\vc}[1]{{\mathbf #1}}
\newcommand{\blah}[1]{}
\newcommand{\Ent}{{\rm Ent}}
\newcommand{\var}{{\rm Var}}
\newcommand{\diy}{\begin{displaystyle}}
\newcommand{\eiy}{\end{displaystyle}}
\newcommand{\GamV}[1]{\Gamma^{(V)}_{#1}}
\newcommand{\Gam}[1]{\Gamma_{#1}}
\newcommand{\EEval}[1]{{\mathcal{E}}^{(#1)}}
\newcommand{\EE}{\EEval{V}}
\newcommand{\EEt}{\EEval{V_t}}
\newcommand{\EEd}[2]{\EEval{V}_{#1#2}}
\newcommand{\EEsym}[2]{E^{(V,c)}_{#1#2}}
\newcommand{\ZZ}{{\mathcal{Z}}}
\newcommand{\psh}{p^{\#}}
\newcommand{\II}{{\mathbb{I}}}
\title{A discrete log-Sobolev inequality  under a Bakry-\'{E}mery type condition}
\author{Oliver Johnson\thanks{School of Mathematics, University of Bristol, University Walk, Bristol, BS8 1TW, UK. 
Email {\tt maotj@bristol.ac.uk} }}
\date{\today}
\begin{document}

\maketitle 

\begin{abstract} \noindent
We consider probability mass functions $V$ supported on the positive integers
using arguments introduced by Caputo, Dai Pra and Posta, based on  a  Bakry--\'{E}mery condition for a
 Markov birth and death operator with invariant measure $V$.
 Under this condition, we prove a new
modified logarithmic Sobolev inequality, generalizing and strengthening results of Wu,  Bobkov and Ledoux,
and Caputo, Dai Pra and Posta. We show how this inequality implies   results including concentration of measure and hypercontractivity, and discuss how it may
extend to higher dimensions.
\end{abstract}

\section{Introduction and main results}

In their classic 1985 paper \cite{bakry}, Bakry and \'{E}mery introduced the $\Gamma_2$ operator and showed
that  (assuming the  Bakry--\'{E}mery condition)  it
could  be used to prove  results such as Poincar\'{e} and log-Sobolev inequalities.  The books by Bakry, Gentil and Ledoux \cite{bakry2} and by
Guionnet and Zegarlinski \cite{guionnet}  review this theory.  We give a brief overview in
Section \ref{sec:contbakryemery}.

Historically, this analysis 
was  restricted to  functions on continuous spaces such as $\re^d$, or more generally
Riemannian manifolds. However,
more recently it was extended to discrete spaces by Caputo,
Dai Pra and Posta \cite{caputo},  by considering the evolution of a birth and death Markov chain. This analysis built on the work of Chen and others
(see for example \cite{chen8}), who used properties of birth and death chains to bound the spectral gap.
We use a version of these methods here;
we fix  probability mass function $V$ whose support is the whole of  $\Z_+$ and fix
the generator $L_V$ of a birth and death Markov chain with invariant measure $V$:
\begin{definition} \label{def:lv} Given a fixed probability mass function $V$,
 write $L_V$ for the operator acting on functions, and $L_V^*$ for the adjoint with respect to counting measure by:
\begin{eqnarray}
L_V f(x) & := & \left( f(x+1) - f(x) \right) - \frac{V(x-1)}{V(x)} \left( f(x) - f(x-1) \right),  \label{eq:lvdef} \\
L_V^* f(x) & := &  f(x-1) - \left(1 + \frac{V(x-1)}{V(x)} \right) f(x) + \frac{ V(x)}{V(x+1)} f(x+1).  \label{eq:lvadjdef}
\end{eqnarray}
In Equation \eqref{eq:lvdef} and throughout, we use the convention that $V(-1) = 0$.
\end{definition}
 In Section \ref{sec:discretemc}
we formally define the resulting operators
$\GamV{1}$ and $\GamV{2}$ and discuss the resulting
 Bakry--\'{E}mery condition
(we refer to this as the inegrated BE($c$) condition, Condition \ref{cond:dbec}). 
In contrast to the continuous case we require average control, rather than pointwise control of the relevant
operators.  However (see Remark \ref{rem:prodrule}), this $\GamV{1}$ operator does not satisfy a product rule, so 
classical
proofs of log-Sobolev inequalities do not  carry over. 

The continuous Bakry-\'{E}mery condition relates to log-concavity of the underlying reference measure,
with the Gaussian playing a distinguished role (see Example \ref{ex:U2bound}).
We use a similar condition here, which corresponds to Assumption A of \cite{caputo} (in the case where the upward
jump rates of the birth and death chain are all equal to 1):
\begin{definition} Given a probability mass function $V$, write 
\begin{equation} \label{eq:EEdef} \EE(x) := \frac{ V(x)^2 - V(x-1) V(x+1)}{V(x) V(x+1)}
= \frac{V(x)}{V(x+1)} - \frac{V(x-1)}{V(x)}. \end{equation}
\end{definition}
\begin{condition}[$c$-log-concavity] \label{cond:clc}
 If $\EE(x) \geq c$ for all $x \in Z_+$, we say that $V$ is $c$-log-concave.
\end{condition}
In Proposition \ref{prop:clogcgivesdbec} we show the integrated BE($c$) condition is implied by $c$-log-concavity.
\cite{caputo} showed that  $c$-log-concavity follows  from
 the ultra log-concavity (ULC) property of Liggett and Pemantle 
\cite{liggett,pemantle}. 
 Hence integrated 
BE($c$) holds for parametric families of random variables including Poisson, binomial and Poisson-binomial (Bernoulli sums).
For the Poisson mass function with mean $\lambda$ (we write $V = \Pi_{\lambda}$), the $\EE(x) \equiv 1/\lambda$,
so $c$-log-concavity holds with $c = 1/\lambda$, which Lemma \ref{lem:cbound} shows is an extreme value.  This 
helps us to understand $\EE(x)$ as a discrete curvature term, in the sense dicussed by Chafa\"{i} in \cite{chafai,chafai5}.
For example,
in \cite[Section 1.3]{chafai} it is remarked that the $M/M/\infty$ queue (corresponding to $V=\Pi_{\lambda}$) can be understood
to have constant curvature.
 
As in \cite{bakry} and \cite{caputo},  in   Section \ref{sec:prooflsi} we prove
 a new (modified) log-Sobolev inequality, Theorem \ref{thm:lsi},
which is the main result of this paper.
In the continuous case, the log-Sobolev inequality holds under the Bakry-\'{E}mery condition (see for example Theorem \ref{thm:contlsi}), whereas our result requires the
 (slightly stronger) $c$-log-concavity condition.

To fix notation,  we
write $\Delta f(x) = f(x+1) -f(x)$ for the right difference operator, and $L f(x) = f(x+1) - 2 f(x) + f(x-1)$.
Given a probability mass function $V$ and function $f$, we write $\var_V(f) = \sum_{x=0}^\infty V(x) (f(x) - \mu_{V,f})^2$, where $\mu_{V,f} = \sum_{x=0}^\infty V(x) f(x)$.
Similarly, we write $\Ent_V(f) = \sum_{x=0}^\infty V(x) f(x) \log f(x) - \mu_{V,f} \log \mu_{V,f}$.

We now state the main result of this paper. 
As discussed in more detail in Remark \ref{rem:compare}
 below, the form of this inequality is suggested by the fact that it holds for the case where $V$ is
Poisson, as proved in \cite[Theorem 1.1]{wu} and \cite[Corollary 2.4]{chafai}.

\begin{theorem}[New modified log-Sobolev inequality] \label{thm:lsi}
Fix probability mass function $V$, whose support is the whole of the positive integers $\Z_+$ and which
satisfies the $c$-log-concavity condition  (Condition \ref{cond:clc}). For any function $f$ with
positive values:
\begin{equation} \label{eq:lsi} \Ent_V(f) \leq \frac{1}{c} \sum_{x=0}^\infty
 V(x) f(x+1)  \left( \log \left( \frac{ f(x+1)}{f(x)} \right) - 1
+ \frac{f(x)}{f(x+1)} \right) . \end{equation}
By the well-known bound $\log 1/u - 1 + u \geq 0$ for all $u > 0$,  the RHS of \eqref{eq:lsi} is positive.
\end{theorem}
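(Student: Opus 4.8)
The plan is to follow the classical Bakry--Émery semigroup strategy, adapted to the discrete birth-and-death setting as in \cite{caputo}. Let $(P_t)_{t\ge 0}$ be the semigroup generated by $L_V$, so that $V$ is invariant and reversible for $P_t$, and set $f_t = P_t f$ (assuming $f$ has positive values, which is preserved). The entropy $\Ent_V(f)$ can be written as $-\int_0^\infty \frac{d}{dt}\Ent_V(f_t)\,dt$, provided $f_t \to \mu_{V,f}$ as $t\to\infty$, so the whole inequality reduces to controlling $-\frac{d}{dt}\Ent_V(f_t)$ at each time and integrating. Differentiating gives $-\frac{d}{dt}\Ent_V(f_t) = -\sum_x V(x)\,(L_V f_t)(x)\,\log f_t(x)$, which after the usual integration-by-parts (summation-by-parts against the reversible measure $V$, using the explicit form of $L_V$ and $L_V^*$ from Definition \ref{def:lv}) becomes a Dirichlet-type form in $\log f_t$. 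This is the ``$I(f_t)$'' term; the target is to show that the RHS of \eqref{eq:lsi} evaluated at $f$, call it $R(f)$, dominates $\int_0^\infty I(f_t)\,dt$, and the standard route is to show $I(f)$ itself dominates $-\frac{d}{dt}R(f_t)$ up to the factor $1/c$, or more precisely to run the $\Gamma_2$ argument.

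Concretely, I expect the argument to hinge on the identity/inequality connecting $\GamV{2}$ to the $c$-log-concavity parameter: by Proposition \ref{prop:clogcgivesdbec}, $c$-log-concavity implies the integrated BE($c$) condition, i.e. $\sum_x V(x)\,\GamV{2}(g)(x) \ge c\sum_x V(x)\,\GamV{1}(g)(x)$ for suitable $g$. The substitution $g = \log f_t$ (or $g$ chosen so that the Dirichlet form of $g$ reproduces the nonlinear ``$\log(f(x+1)/f(x)) - 1 + f(x)/f(x+1)$'' expression in \eqref{eq:lsi}) is the key move: one wants to differentiate the modified-entropy production functional a second time in $t$ and recognise the BE($c$) quantity, yielding an exponential decay $\frac{d}{dt}\big[\text{production at }t\big] \le -c\,[\cdots]$ or a differential inequality that integrates to the factor $1/c$. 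The precise quantity that plays the role of ``$\GamV{1}$'' here must be the one matching the right-hand side of \eqref{eq:lsi}; identifying it and verifying the chain-rule-type manipulations go through despite the failure of the product rule (Remark \ref{rem:prodrule}) is where care is needed.

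The main obstacle, as flagged in the excerpt, is precisely that $\GamV{1}$ does not satisfy a product rule, so the textbook manipulation $L_V(\phi(f)) = \phi'(f)L_V f + \phi''(f)\GamV{1}(f)$ fails and one cannot directly differentiate $\Ent_V(P_t f)$ twice and land cleanly on $\GamV{2}$. The workaround I anticipate is to work not with $f_t$ but with a cleverly chosen function of it — in the Poisson/known cases the natural object is $u_t = f_{t}(x+1)/f_t(x)$ or $\log f_t$ — and to exploit convexity of $\phi(u) = u\log u - u + 1$ (equivalently $\psi(u)=\log(1/u)-1+u$, the function appearing on the RHS) together with a pointwise or averaged comparison coming from $\EE(x)\ge c$. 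A secondary technical point is justifying the boundary behaviour at $x=0$ (using the convention $V(-1)=0$) and the $t\to\infty$ limit / interchange of sum and integral, which should be routine under positivity and summability but must be stated. I would also double-check the endpoint case $V=\Pi_\lambda$, $c=1/\lambda$, against \cite[Theorem 1.1]{wu} to make sure the constant $1/c$ is sharp and the algebra is calibrated correctly.
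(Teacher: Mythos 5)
Your overall architecture matches the paper's: run the semigroup $f_t = e^{tL_V}f$, write $\Ent_V(f) = \int_0^\infty (-\Theta'(t))\,dt$ with $\Theta(t) = \sum_x V(x) f_t(x)\log f_t(x)$, and compare the entropy production at time $t$ with the time derivative of the right-hand-side functional evaluated along the flow, i.e.\ of $\psi(t) = \sum_x V(x)\bigl(f_t(x+1)\log(f_t(x+1)/f_t(x)) - f_t(x+1) + f_t(x)\bigr)$. You also correctly flag that the failure of the product rule blocks a direct appeal to the integrated BE($c$) condition with $g=\log f_t$. One directional slip: the comparison you need is $-\Theta'(t) \le \frac{1}{c}(-\psi'(t))$ for each $t$, so that integrating from $0$ to $\infty$ yields $\Ent_V(f)\le \frac{1}{c}\psi(0)$; your phrasing that ``$I(f)$ dominates $-\frac{d}{dt}R(f_t)$'' points the inequality the wrong way.

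The genuine gap is that the decisive computation is left undone, and it does not follow from the ingredients you cite. Differentiating $\psi$ and summing by parts (using \eqref{eq:diffLV} and \eqref{eq:fgdiff}) produces exactly two pieces: a term $-\sum_x V(x)\,\EE(x)\,(f_t(x+1)-f_t(x))\log(f_t(x+1)/f_t(x))$, which is $\le c\,\Theta'(t)$ by $c$-log-concavity because the summand is a product of two factors of the same sign; and a residual term $\sum_x V(x) f_t(x+1)\, w(U;s)$ with $U = f_t(x)f_t(x+2)/f_t(x+1)^2$, $s = f_t(x)/f_t(x+1)$ and $w(U;s) = -(U/s-1)\log U + (1-U)(1-1/s)$. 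The entire proof hinges on showing $w(U;s)\le 0$ for all $U,s> 0$ (Lemma \ref{lem:tech}: $w(\cdot\,;s)$ is strictly concave in $U$ with $w(1;s)=0$ and vanishing $U$-derivative at $U=1$). Your suggestion to ``exploit convexity of $\phi(u)=u\log u-u+1$'' is not this inequality: $w$ is a genuinely two-variable quantity involving the second difference of $f_t$ through $f_t(x+2)$, and its nonpositivity is precisely what substitutes for the missing chain rule. Without identifying this residual and proving its sign, the differential comparison $\psi'(t)\le c\,\Theta'(t)$ — and hence the theorem — does not close.
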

Previous work on different forms of log-Sobolev inequalities in discrete settings is discussed and summarised by Bobkov
and Tetali \cite{bobkov11}. In particular, \cite[Proposition 3.6]{bobkov11} gives a hierarchy of different constants and discusses
the implications between them. One particular form of interest is the more standard modified log-Sobolev inequality:
\begin{equation} \label{eq:modlsi} \Ent_V(f) \leq C \sum_{x=0}^\infty V(x) (f(x+1) - f(x)) \left( \log f(x+1) - \log f(x) \right),
\end{equation}
also considered in \cite{boudou} and discussed in Remark \ref{rem:compare}.\ref{it:modlsi} below.
\begin{remark} \label{rem:compare} We discuss Theorem \ref{thm:lsi} in detail, to put it in context:
\begin{enumerate} 
\item Suppose $f(x) = p(x)/V(x)$, for probability mass function $p$.
Using  normalizing constant $K = \left( \sum_{x=0}^\infty p(x+1) V(x)/V(x+1) \right)^{-1}$, then $\psh(x) = K p(x+1) V(x)/V(x+1)$ is a 
probability mass function acting as a weighted version of $p$. Then
\eqref{eq:lsi} means relative entropy $D( p \| q) := \sum_{x=0}^\infty p(x) \log( p(x)/q(x))$ is bounded by the sum of two positive terms, as 
\begin{equation} \label{eq:lsirestated} D( p \| V) \leq \frac{1}{c K}  \left(  D( \psh \| p) + \left( \log \frac{1}{K} - 1 + K \right)
\right).
\end{equation}
\item If $V = \Pi_{\lambda}$ and $\lambda$ is the mean of $p$, then $\psh$ is the 
size-biased version of $p$ (see for example \cite{johnson24}), $c=1/\lambda$ 
and $K = 1$. We recover the fact that
\begin{equation} \label{eq:wu} D( p \| V) \leq \lambda  D( \psh \| p), \end{equation}
which is a log-Sobolev inequality of Wu \cite[Theorem 1.1]{wu}, reproved more directly in \cite{yu2}
(see also \cite[Corollary 2.4]{chafai}). The relationship
between Theorem \ref{thm:lsi} and \eqref{eq:wu}
 is the same as between the Bakry-\'{E}mery log-Sobolev inequality (Theorem \ref{thm:contlsi}) and the original result of Gross
\cite{gross}.
\item  The RHS of \eqref{eq:lsi} can be understood to be 
\begin{equation} \label{eq:chafai} \frac{1}{c} \sum_{x=0}^\infty V(x) 
A^{\Phi}( f(x), f(x+1)- f(x)), \end{equation}
 where $A^{\Phi}(u,v) = \Phi(u+v) - \Phi(u) - \Phi'(u) v$ is the $A$-transform of the function
$\Phi(u) := u \log u$, as introduced by Chafa\"{i} in \cite{chafai}. This allows us to understand the fact that
Theorem \ref{thm:lsi} reduces to \cite[Corollary 2.4]{chafai} in the case where $V = \Pi_{\lambda}$.
\item Using the bound
$ \log \left( f(x+1)/f(x) \right) \leq  f(x+1)/f(x) - 1$ we deduce that if $V$ satisfies the $c$-log-concavity condition then:
\begin{eqnarray}
\Ent_{V}(f) & \leq & \frac{1}{c} \sum_{x=0}^\infty V(x) \frac{(f(x+1)-f(x))^2}{f(x)}, \label{eq:bltype}
\end{eqnarray}
Hence taking $V = \Pi_\lambda$ and $c= 1/\lambda$  we see Theorem \ref{thm:lsi} generalizes and strengthens 
the log-Sobolev inequality of Bobkov and Ledoux \cite[Corollary 4]{bobkov3}.
\item In the spirit of \cite{johnson11} the RHS of \eqref{eq:bltype} is a form of scaled Fisher information,
equalling
$$ 
\frac{1}{c} \sum_{x=0}^\infty p(x) \left( \frac{ p(x+1) V(x)}{ p(x) V(x+1)} - 1 \right)^2
= \frac{1}{c} \sum_{x=0}^\infty p(x) \left( \frac{  \psh(x)}{K p(x)} - 1 \right)^2,$$
where we may interpret the ratio $ \psh(x)/(K p(x))$ as a scaled score function.
\item \label{it:sharp}
Theorem \ref{thm:lsi} is sharp; equality is achieved in \eqref{eq:lsi} when $V = \Pi_{\lambda}$
for any value of $a$ in $f(x) = \exp(ax + b)$, or equivalently in \eqref{eq:lsirestated} for $p = \Pi_{\mu}$. To verify this, note that 
$p(x)/V(x) = \exp(\lambda - \mu)
(\mu/\lambda)^x$,
so the LHS of \eqref{eq:lsirestated} becomes $\lambda - \mu + \mu \log(\mu/\lambda)$.  Further, 
$c = 1/\lambda$, $\psh = p$
and $K = \lambda/\mu$, the RHS of \eqref{eq:lsirestated} is $\mu ( \log(\mu/\lambda) - 1 + \lambda/\mu)$ and
equality holds.
\item \label{it:modlsi}
Further, Theorem \ref{thm:lsi} strengthens the log-Sobolev inequality of  Caputo et al \cite{caputo} who showed that (under the same
condition) the modified log-Sobolev inequality in the sense of \cite{bobkov11} and Equation \eqref{eq:modlsi} holds:
\begin{equation} \label{eq:lsicap} \Ent_V(f) \leq \frac{1}{c} \sum_{x=0}^\infty V(x) 
(f(x+1)  - f(x)) \log \left( \frac{ f(x+1)}{f(x)} \right). \end{equation}
The expression \eqref{eq:lsicap} is a symmetrized version of \eqref{eq:lsi}, with its RHS equal to the
RHS of \eqref{eq:lsi} plus a similar-looking term (which is  again positive, as before), namely
\begin{equation} \label{eq:lsidiff} \frac{1}{c} \sum_{x=0}^\infty V(x) f(x)  \left( \log \left( \frac{ f(x)}{f(x+1)} \right) - 1
+ \frac{f(x+1)}{f(x)} \right) . \end{equation}
\item Again, to consider sharpness; \cite{caputo} shows  \eqref{eq:lsicap} is sharp, in the weaker sense that the constant
cannot be improved in general. However, equality only holds in \eqref{eq:lsicap} 
for $f(x) = \exp(a x + b)$ in the limit as 
$a \rightarrow \infty$ (the term \eqref{eq:lsidiff} vanishes in this limit).
\item
In the case $V = \Pi_{\lambda}$, the  RHS of \eqref{eq:lsi} is strongly reminiscent of \cite[Corollary 7]{bobkov3},
though that result is expressed in terms of the modulus of differences of functions and their logarithms, and
 is only sharp for $f(x) = \exp(ax+ b)$ where $a \geq 0$.
\end{enumerate}
\end{remark}
In Section \ref{sec:consequences} we discuss some consequences of Theorem \ref{thm:lsi}, including concentration of measure and hypercontractivity results
The assumption that $V$ has support the whole of $\Z_+$ can be relaxed by a perturbation argument (see Remark
\ref{rem:general} below). However, making this assumption simplifies the exposition of the paper.

In a standard way, we consider $f = (1+\epsilon g)$, and let $\epsilon \rightarrow 0$ in \eqref{eq:lsi}. The LHS behaves like $(\epsilon^2/2) \var_V(g)$, and the RHS
like $(\epsilon^2/2) (\frac{1}{c} \sum_{x=0}^\infty V(x) \Delta g(x)^2)$, so that as  expected, the log-Sobolev inequality
Theorem \ref{thm:lsi} implies a  Poincar\'{e} inequality    \eqref{eq:poincare}. 
This Poincar\'{e} inequality
can be proven independently, and  is in fact equivalent to  the slightly weaker integrated BE($c$) assumption,
with the same constant
(see Section
\ref{sec:proofpoinc}).
\begin{theorem}[Poincar\'{e} inequality] \label{thm:poincare}
Fix probability mass function $V$, whose support is the whole of the positive integers $\Z_+$.  Then 
for any function $f$:
\begin{equation} \label{eq:poincare} \var_V(f) \leq \frac{1}{c} \sum_{x=0}^\infty V(x) \Delta f(x)^2, \end{equation}
if and only if $V$ 
satisfies the integrated BE($c$) condition  (Condition \ref{cond:dbec}).
\end{theorem}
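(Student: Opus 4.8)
The plan is to translate both sides of the claimed equivalence into statements about the non‑negative self‑adjoint operator $-L_V$ on $L^2(V)$, and then read them off from the spectral theorem. The starting point is two identities recorded from Section~\ref{sec:discretemc}: using that $V$ is invariant for $L_V$ (so $\ep_V[L_V h]=0$ for all $h$) and that $L_V$ is self‑adjoint on $L^2(V)$, one has $\ep_V[\GamV{1}(f)] = -\ep_V[fL_Vf] = \sum_{x=0}^\infty V(x)\Delta f(x)^2 =: \mathcal{E}^{(V)}(f,f)$, the Dirichlet form, and (after expanding $\GamV{2}$, killing the term $\ep_V[L_V\GamV{1}(f)]=0$ by invariance and then applying self‑adjointness) $\ep_V[\GamV{2}(f)] = \ep_V[(L_Vf)^2] = \|L_Vf\|_{L^2(V)}^2$. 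Hence the integrated BE($c$) condition (Condition~\ref{cond:dbec}) says precisely
\[ \|L_V f\|_{L^2(V)}^2 \ \geq\ c\,\mathcal{E}^{(V)}(f,f) \qquad \text{for all } f, \]
while \eqref{eq:poincare} says $\var_V(f) = \|f-\mu_{V,f}\|_{L^2(V)}^2 \leq \tfrac1c\,\mathcal{E}^{(V)}(f,f)$. Since $V$ has full support the chain is irreducible, so $-L_V\geq 0$ is self‑adjoint with kernel exactly the constants, and $f-\mu_{V,f}$ is the orthogonal projection of $f$ onto the orthocomplement of the constants.

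For the direction ``BE($c$) $\Rightarrow$ Poincar\'e'', by the spectral theorem it suffices to show $-L_V$ has no spectrum in $(0,c)$. If $g$ lies in the range of the spectral projection $\mathbf 1_{(0,c)}(-L_V)$ then $g$ is orthogonal to the constants, so BE($c$) applies to $g$; writing $\mu_g$ for the spectral measure of $-L_V$ at $g$ (supported on $(0,c)$), it reads $\int_{(0,c)}\lambda^2\,\mathrm d\mu_g(\lambda)\geq c\int_{(0,c)}\lambda\,\mathrm d\mu_g(\lambda)$, i.e. $\int_{(0,c)}\lambda(\lambda-c)\,\mathrm d\mu_g(\lambda)\geq 0$; but the integrand is $<0$ off a $\mu_g$‑null set, forcing $\mu_g\equiv 0$ and $g=0$. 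So for any $g\perp(\text{constants})$ the measure $\mu_g$ is supported on $[c,\infty)$, giving $\mathcal{E}^{(V)}(g,g)=\int_{[c,\infty)}\lambda\,\mathrm d\mu_g \geq c\|g\|_{L^2(V)}^2$, which applied to $g=f-\mu_{V,f}$ is exactly \eqref{eq:poincare}.

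For ``Poincar\'e $\Rightarrow$ BE($c$)'', fix $f$ and set $g=f-\mu_{V,f}$, so $g\perp(\text{constants})$, $\mathcal{E}^{(V)}(f,f)=\ep_V[g(-L_V)g]$, and $L_Vf=L_Vg$ whence $\|L_Vf\|_{L^2(V)}^2=\|(-L_V)g\|_{L^2(V)}^2$. Put $h=(-L_V)^{1/2}g$ via functional calculus; then $h\perp(\text{constants})$ and $(-L_V)g=(-L_V)^{1/2}h$, so $\|(-L_V)g\|_{L^2(V)}^2 = \ep_V[h(-L_V)h] = \mathcal{E}^{(V)}(h,h) \geq c\,\var_V(h) = c\|h\|_{L^2(V)}^2 = c\,\ep_V[g(-L_V)g] = c\,\mathcal{E}^{(V)}(f,f)$, using \eqref{eq:poincare} for $h$ and $\|h\|^2=\ep_V[g(-L_V)g]$. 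This is integrated BE($c$) for $f$.

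The content beyond this algebra is functional‑analytic, and that is where I expect the real care: one must verify that $L_V$ is (essentially) self‑adjoint on a suitable dense core, that both inequalities are genuinely being asserted on that core (so that in particular $h=(-L_V)^{1/2}g$ in the last step is an admissible test function), and that the infinite sums entering $\ep_V[\GamV{2}(f)]$ and the rearrangements in the first paragraph are absolutely convergent --- the integrability bookkeeping, presumably already set up in Section~\ref{sec:discretemc}, is the only delicate point. A spectral‑theorem‑free alternative for the $(\Rightarrow)$ direction of the Poincar\'e implication is the semigroup route: $\var_V(f)=\int_0^\infty 2\,\ep_V[\GamV{1}(P_tf)]\,\mathrm dt$ together with $\frac{\mathrm d}{\mathrm dt}\ep_V[\GamV{1}(P_tf)] = -2\,\ep_V[\GamV{2}(P_tf)] \leq -2c\,\ep_V[\GamV{1}(P_tf)]$ gives $\ep_V[\GamV{1}(P_tf)]\leq e^{-2ct}\ep_V[\GamV{1}(f)]$ and hence the bound $\tfrac1c\mathcal{E}^{(V)}(f,f)$ on integration; but this needs ergodicity ($P_tf\to\mu_{V,f}$ in $L^2(V)$) and an interchange of limit and summation, so it is not obviously simpler than the spectral argument.
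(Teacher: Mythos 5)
Your proposal is correct, but it takes a genuinely different route from the paper's. For the forward direction the paper runs a semigroup argument: it sets $\Lambda(t)=\sum_{x}V(x)f_t(x)^2$ along the flow $\partial_t f_t=L_Vf_t$, computes $\Lambda'(t)=-2\sum_x V(x)\GamV{1}(f_t,f_t)$ and $\Lambda''(t)=4\sum_x V(x)(L_Vf_t)^2=4\sum_xV(x)\GamV{2}(f_t,f_t)$, and integrates the differential inequality $\Lambda''(t)\geq -2c\,\Lambda'(t)$ over $[0,\infty)$; your spectral-projection argument replaces this by the observation that integrated BE($c$) forbids spectrum of $-L_V$ in $(0,c)$, which combined with irreducibility (kernel $=$ constants) gives the gap directly. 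For the converse the paper avoids functional calculus entirely: for mean-zero $f$ it applies Cauchy--Schwarz to $\sum_xV(x)\GamV{1}(f,f)=-\sum_xV(x)f(x)L_Vf(x)$, bounds the resulting factor $\sum_xV(x)f(x)^2$ by the Poincar\'{e} inequality, and divides through --- this is exactly your $h=(-L_V)^{1/2}g$ computation with the square root eliminated. What your version buys is conceptual transparency: both conditions are visibly the single statement that the spectrum of $-L_V$ on the orthocomplement of the constants lies in $[c,\infty)$. What it costs is precisely the point you flag yourself: applying \eqref{eq:poincare} to $h=(-L_V)^{1/2}g$ presupposes essential self-adjointness of $L_V$ and that $h$ belongs to the class of admissible test functions, neither of which is automatic for an unbounded birth--death generator, whereas the paper's Cauchy--Schwarz step only ever evaluates $L_V$ on $f$ itself. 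If you want to match the paper's level of rigour without proving self-adjointness, replace the square-root step by that Cauchy--Schwarz inequality; your preliminary identities $\sum_xV(x)\GamV{1}(f,f)=\sum_xV(x)\Delta f(x)^2$ and $\sum_xV(x)\GamV{2}(f,f)=\sum_xV(x)(L_Vf(x))^2$ agree with \eqref{eq:gamv1sum} and with the paper's computation of $\Lambda''$.
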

In the Poisson case where $V = \Pi_{\lambda}$, taking $c = 1/\lambda$ 
we recover the Poincar\'{e} inequality of Klaasen \cite{klaasen}. In general, as discussed in Section 
\ref{sec:proofpoinc}, Theorem \ref{thm:poincare} is comparable to a Poincar\'{e} inequality
proved under similar conditions by very different methods in \cite{johnson24}.

When $V$ has finite support, we may  adapt the Markov chain, and choose a different $L_V$ with
invariant distribution $V$. The correct statement of the Poincar\'{e} inequality in this context may be  in the
spirit of \cite{johnson29} (where we adapt the form of the derivative operator used). This remains a topic for 
future research.

We briefly describe some related work in the literature.
The proof of Theorem \ref{thm:poincare} mirrors the type of argument given for a range of discrete systems, including a class of
Markov dynamics that includes Kawasaki dynamics, by Boudou, Caputo, Dai Pra and Posta \cite{boudou}. The work of Caputo, Dai Pra and Posta \cite{caputo}
was developed by Fathi and Maas \cite{fathi}, building on a Markov chain-based construction of Ricci curvature on a discrete space introduced by Erbar and Maas \cite{erbar} (see
also \cite{mielke}). In particular, \cite[Theorem 1.5]{erbar} showed that Poincar\'{e} and  modified log-Sobolev inequalities (in the form of \eqref{eq:modlsi}) hold assuming a 
bound on their form of Ricci curvature. 
A form of the $c$-log-concavity condition was used by Joulin \cite{joulin} (along with another form of curvature condition), and was used to derive concentration of measure bounds in the context of birth and death processes.
\section{Bakry--\'{E}mery calculus for continuous spaces} \label{sec:contbakryemery}
We  briefly discuss the classical Bakry-\'{E}mery calculus for continuous spaces -- see \cite{bakry2} for a clear and detailed review.  Since Theorem \ref{thm:lsi} considers measures supported on $\Z_+$, we 
restrict our description to measures on $\re$, although this theory holds in considerably 
greater generality.
The key is a  second-order differential operator $L$,  self-adjoint with respect to  reference measure $d\mu$, which allows the creation of the so-called carr\'{e} du champ operator $\Gam{1}$,
and the iterated operator $\Gam{2}$.
\begin{definition} \label{def:gam1} For any functions $f$ and $g$, write
\begin{eqnarray} 
\Gam{1}(f,g)  &= & \frac{1}{2} \left[ L(f g) - f L g - g L f \right]    \label{eq:gam1}  \\
\Gam{2}(f,g) & = & \frac{1}{2} \left[ L \left( \Gam{1}(f , g) \right) -  \Gam{1} (f, L g) -  \Gam{1}(g, L f) \right]   
 \label{eq:gam2} 
\end{eqnarray}
\end{definition}
The central definition in the theory is the following, which was introduced in \cite{bakry}.
\begin{condition}[Bakry-\'{E}mery condition] \label{cond:be}
 We say that the Bakry-\'{E}mery condition holds with constant $c$ if  for all functions $f$:
\begin{equation} \label{eq:be} \Gam{2}(f,f) \geq c \Gam{1}(f,f).\end{equation}
\end{condition}
One key example is the following, which  motivates the $c$-log-concavity property, Condition \ref{cond:clc}.
It
simplifies further if $U = c x^2/2$ and hence $\mu$ is Gaussian
with variance $1/c$.  
\begin{example}[e.g. \cite{guionnet}, Exercise 4.18] \label{ex:U2bound}
For function $U(x)$, take $d \mu(x) = \exp(-U(x)) dx/\ZZ$. Write 
$$L f(x) = f''(x) - U'(x) f'(x) = \exp(U(x)) \biggl( f'(x) \exp(-U(x)) \biggr)'.$$
It is simple to verify that (for well-behaved $U$, including those satisfying $U''(x) \geq c$, as assumed below),
 $L$ is self-adjoint with respect to $\mu$, that
$\Gam{1}(f,g) = f' g'$, and 
\begin{equation} \label{eq:gamma2val}
\Gam{2}(f,g) 
= f''(x) g''(x) + U''(x) f'(x) g'(x).
\end{equation}
If we assume that $U''(x) \geq c$ then $\Gam{2}(f,f) = f''(x)^2 + U''(x) f'(x)^2 \geq c f'(x)^2 = c \Gam{1}(f,f)$, and the Bakry-\'{E}mery condition \ref{cond:be} holds with constant $c$.
\end{example}

\begin{remark}
As discussed in \cite{guionnet}, the $\Gamma_1$
operator satisfies a product rule of the form:
\begin{equation} \label{eq:gamma1prodrule}
\Gamma_1(f, gh) = \Gamma_1(f,g) h + \Gamma_1(f,h) g.
\end{equation}
As a result (see for example \cite[Lemma 4.12]{guionnet}),  for any well-behaved function $v$, the $\Gamma_1$
operator satisfies a chain rule of the form
\begin{equation} \label{eq:gamma1chainrule}
\Gamma_1( v(f), g) = v'(f) \Gamma_1( f, g),
\end{equation}
which is a key
 reason that the Bakry-\'{E}mery theory applies in  the continuous case.
\end{remark}
 
We state two results  which arise from the Bakry--\'{E}mery calculus, as first described in
\cite{bakry} and reviewed and extended since by a variety of authors. For example, taking $U(t) = t^2$ in \cite[Proposition 5]{bakry} we deduce (see also 
\cite[Proposition 4.8.1]{bakry2}):
\begin{theorem} If 
the Bakry-\'{E}mery condition (Condition \ref{cond:be}) holds with constant $c$ then the Poincar\'{e} inequality holds with constant
$1/c$; that is for any function $f$,
$$ \var_\mu(f) \leq \frac{1}{c} \int  \Gam{1}(f,f)(x) d\mu(x).$$
\end{theorem}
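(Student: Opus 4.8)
The plan is to use the semigroup / interpolation argument that is standard in Bakry--\'{E}mery theory. Let $(P_t)_{t\geq 0}$ be the Markov semigroup generated by $L$, with invariant measure $\mu$, and fix a function $f$ (say bounded, smooth, with $\int f\, d\mu = 0$, which loses no generality since $\Gamma_1$ annihilates constants). The idea is to write $\var_\mu(f)$ as an integral along the flow: because $P_\infty f = \int f\, d\mu = 0$ and $P_0 f = f$, we have
\begin{equation}
\var_\mu(f) = \int f^2\, d\mu = -\int_0^\infty \frac{d}{dt}\Big( \int (P_t f)^2\, d\mu \Big)\, dt .
\end{equation}

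First I would compute that time derivative. Using $\frac{d}{dt} P_t f = L P_t f$ and self-adjointness of $L$ with respect to $\mu$, one gets $\frac{d}{dt}\int (P_t f)^2 d\mu = 2\int (P_t f)(L P_t f)\, d\mu = -2\int \Gamma_1(P_t f, P_t f)\, d\mu$, where the last equality is the integration-by-parts identity $\int g\, L g\, d\mu = -\int \Gamma_1(g,g)\, d\mu$ that follows directly from the definition \eqref{eq:gam1} of $\Gamma_1$ together with $\int L h\, d\mu = 0$. Hence
\begin{equation}
\var_\mu(f) = 2\int_0^\infty \Big( \int \Gamma_1(P_t f, P_t f)\, d\mu \Big)\, dt .
\end{equation}
Second, I would control the integrand by differentiating $t \mapsto \int \Gamma_1(P_t f, P_t f)\, d\mu$ in $t$: a short computation using \eqref{eq:gam1}--\eqref{eq:gam2} and self-adjointness gives $\frac{d}{dt}\int \Gamma_1(P_t f, P_t f)\, d\mu = -2\int \Gamma_2(P_t f, P_t f)\, d\mu$. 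Invoking the Bakry--\'{E}mery condition \eqref{eq:be}, $\Gamma_2 \geq c\,\Gamma_1$ pointwise, this derivative is $\leq -2c \int \Gamma_1(P_t f, P_t f)\, d\mu$, so by Gr\"onwall the quantity $\phi(t) := \int \Gamma_1(P_t f, P_t f)\, d\mu$ satisfies $\phi(t) \leq e^{-2ct}\phi(0)$.

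Finally, substituting this exponential decay bound into the variance formula yields $\var_\mu(f) \leq 2\phi(0)\int_0^\infty e^{-2ct}\, dt = \frac{1}{c}\,\phi(0) = \frac{1}{c}\int \Gamma_1(f,f)\, d\mu$, which is the claim. The main obstacle is not the algebra of these identities (which is routine given Definition \ref{def:gam1}) but the analytic justification: one needs that $P_t f$ is regular enough for the differentiations under the integral sign to be valid, that $\int (P_t f)^2 d\mu \to 0$ as $t\to\infty$ (ergodicity / spectral gap), and that boundary terms in the integration by parts vanish. On $\re$ with $d\mu = e^{-U}dx$ and $U'' \geq c > 0$ this is standard, and in a rigorous treatment one works first with a core of nice functions (e.g. smooth compactly supported, or $f$ in the domain of $L$) and then extends by density; I would either cite \cite{bakry2} for these technical points or restrict the statement to such a class of $f$.
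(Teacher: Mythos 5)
Your proposal is correct and is the standard semigroup interpolation argument; the paper does not prove this theorem itself (it cites \cite{bakry} and \cite{bakry2}), but your argument is exactly the one behind those references and is also, step for step, the method the paper uses for the discrete analogue in Section \ref{sec:proofpoinc}, where $\Lambda(t)=\sum_x V(x)f_t(x)^2$ plays the role of $\int (P_tf)^2\,d\mu$. The only cosmetic difference is that the paper integrates the inequality $-\Lambda'(t)\leq \frac{1}{2c}\Lambda''(t)$ directly over $t\in(0,\infty)$ rather than passing through the Gr\"onwall bound $\phi(t)\leq e^{-2ct}\phi(0)$; both give the same constant.
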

Similarly \cite[Theorem 1]{bakry} (see also \cite[Proposition 5.7.1]{bakry2}) gives that:
\begin{theorem} \label{thm:contlsi}
If 
the Bakry-\'{E}mery condition  (Condition \ref{cond:be}) 
 holds with constant $c$ then the logarithmic Sobolev inequality holds with constant
$1/c$; that is for any function $f$ with positive values:
$$ \Ent_\mu(f) \leq \frac{1}{2c} \int  \frac{\Gam{1}(f,f)(x)}{f(x)} d\mu(x).$$
\end{theorem}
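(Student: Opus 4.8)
\emph{Proof strategy.} I would run the Bakry--\'{E}mery / Caputo--Dai Pra--Posta semigroup interpolation, but applied directly to the asymmetric functional on the right-hand side of \eqref{eq:lsi} --- the $A$-transform of $\Phi(u)=u\log u$ in the sense of Chafa\"i (see \eqref{eq:chafai} and Remark~\ref{rem:compare}) --- rather than to the symmetrized Dirichlet form of \cite{caputo}. Let $P_t=e^{tL_V}$ be the positivity-preserving Markov semigroup generated by the operator $L_V$ of Definition~\ref{def:lv}; it has $V$ invariant and $P_tf\to\mu_{V,f}$. Since $L_V$ is self-adjoint with respect to $V$, with $-\sum_x V(x)g(x)L_Vh(x)=\sum_x V(x)\Delta g(x)\Delta h(x)$ and $\sum_x V(x)L_Vh(x)=0$, a routine computation gives, for positive $f$,
\[ I(t):=-\frac{d}{dt}\Ent_V(P_tf)=\sum_{x=0}^\infty V(x)\,\Delta(P_tf)(x)\,\Delta\big(\log P_tf\big)(x)\geq 0, \]
and hence $\Ent_V(f)=\int_0^\infty I(t)\,dt$. (We may assume the right-hand side of \eqref{eq:lsi} is finite; the hypothesis that $V$ is supported on all of $\Z_+$, together with standard truncation as in \cite{caputo}, then justifies the differentiation under the sum and the finiteness of this integral.) Put $\Phi(u)=u\log u$, $A^\Phi(u,v)=\Phi(u+v)-\Phi(u)-\Phi'(u)v$, and $\mathcal D(g):=\sum_{x=0}^\infty V(x)A^\Phi(g(x),\Delta g(x))$, so that $\tfrac1c\mathcal D(f)$ is precisely the right-hand side of \eqref{eq:lsi}; note $\mathcal D(g)\geq 0$ and $\mathcal D(P_tf)\to 0$.

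The theorem then reduces to the single differential inequality
\[ \frac{d}{dt}\,\mathcal D(P_tf)\;\leq\;-\,c\,I(t)\qquad(t\geq 0), \]
which I will call $(\star)$: integrating over $(0,\infty)$ and using $\mathcal D(P_tf)\to 0$ yields $-\mathcal D(f)\leq -c\int_0^\infty I(t)\,dt=-c\,\Ent_V(f)$, i.e.\ \eqref{eq:lsi}. Here $(\star)$ is genuinely sharper than the entropy-production decay $I'(t)\leq -c\,I(t)$ underlying \cite{caputo}: the latter gives only $\Ent_V(f)\leq \tfrac1c I(0)$, and $\tfrac1c I(0)$ exceeds the right-hand side of \eqref{eq:lsi} by the nonnegative term \eqref{eq:lsidiff} (cf.\ Remark~\ref{rem:compare}.\ref{it:modlsi}). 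By the semigroup property $\tfrac{d}{dt}\mathcal D(P_tf)=\tfrac{d}{ds}\big|_{s=0}\mathcal D(P_sg)$ with $g=P_tf>0$, so it is enough to prove $(\star)$ at $t=0$ for every positive $f$; call this $(\star_0)$.

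Proving $(\star_0)$ is where the $c$-log-concavity hypothesis enters and is the main obstacle. Differentiating at $t=0$ by the chain rule (with $\partial_u A^\Phi(u,v)=\log\tfrac{u+v}{u}-\tfrac vu$, $\partial_v A^\Phi(u,v)=\log\tfrac{u+v}{u}$ and $\tfrac{d}{dt}|_0P_tf=L_Vf$), the logarithmic terms cancel and
\[ \frac{d}{dt}\Big|_{t=0}\mathcal D(P_tf)=\sum_{x=0}^\infty V(x)\left[\left(1-\frac{f(x+1)}{f(x)}\right)L_Vf(x)+\log\frac{f(x+1)}{f(x)}\,L_Vf(x+1)\right]. \]
Writing $r(x)=f(x+1)/f(x)$ and $q(x)=V(x-1)/V(x)$ (so $\EE(x)=q(x+1)-q(x)$ and $L_Vf(x)=f(x)(r(x)-1)-q(x)f(x-1)(r(x-1)-1)$) and summing by parts, I would aim to rewrite $-c\,I(0)-\tfrac{d}{dt}|_0\mathcal D(P_tf)$ as a single convergent series $\sum_x V(x)\big[(\EE(x)-c)\,\alpha(x)+\beta(r(x-1),r(x))\big]$ with $\alpha\geq 0$ and $\beta\geq 0$ a fixed function vanishing when $r$ is constant; the first group is then nonnegative by $\EE(x)\geq c$ (Condition~\ref{cond:clc}) applied at each $x$ separately --- which is exactly why the pointwise hypothesis, rather than only the integrated BE($c$) condition, is required --- and the second is the discrete $\Gamma_2$-type residue, nonnegative by elementary convexity estimates of the kind $\log(1/u)-1+u\geq 0$. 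Identifying the right nonnegative grouping and checking $\beta\geq 0$ is the delicate step. Two consistency checks: linearizing $(\star_0)$ at $f\equiv 1$ gives $\sum_x V(x)(L_Vf(x))^2\geq c\sum_x V(x)\Delta f(x)^2$, which is the spectral-gap form of Theorem~\ref{thm:poincare}; and since for $V=\Pi_\lambda$ one has $\EE\equiv c$ while $P_t$ keeps exponentials exponential, $(\star)$ is then an equality along the orbit of any $f(x)=e^{ax+b}$, matching Remark~\ref{rem:compare}.\ref{it:sharp}. Given $(\star_0)$, the theorem follows as above, and the final positivity assertion is $\log(1/u)-1+u\geq 0$ with $u=f(x)/f(x+1)$.
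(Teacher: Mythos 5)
You have sketched a proof of the wrong statement. Theorem \ref{thm:contlsi} is the classical \emph{continuous} logarithmic Sobolev inequality: $\mu$ is a measure on $\re$ (or a Riemannian manifold), $L$ is the diffusion generator of Example \ref{ex:U2bound}, and the hypothesis is the pointwise Bakry--\'{E}mery condition $\Gam{2}(f,f)\geq c\,\Gam{1}(f,f)$ of Condition \ref{cond:be}. The paper does not prove this result at all; it quotes it from \cite{bakry} (see also Proposition 5.7.1 of \cite{bakry2}). Your argument, by contrast, lives entirely on $\Z_+$: it uses the birth--death generator $L_V$ of Definition \ref{def:lv}, the $c$-log-concavity condition, the curvature quantity $\EE(x)$, and it targets inequality \eqref{eq:lsi} --- in other words it is a proof sketch of Theorem \ref{thm:lsi}, not of Theorem \ref{thm:contlsi}. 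The two are not even formally identical analogues: the continuous statement carries the constant $1/(2c)$ in front of $\int \Gam{1}(f,f)/f\,d\mu$, whereas the discrete consequence \eqref{eq:bltype} of Theorem \ref{thm:lsi} carries $1/c$.

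The standard proof of Theorem \ref{thm:contlsi} does share the semigroup-interpolation skeleton you describe, namely $\Ent_\mu(f)=\int_0^\infty I(t)\,dt$ with $I(t)=\int\Gam{1}(P_tf,\log P_tf)\,d\mu$; but the step that closes it is the exponential decay $I'(t)\leq -2c\,I(t)$, which relies essentially on the chain rule \eqref{eq:gamma1chainrule} (the diffusion property) to rewrite $I(t)=\int\Gam{1}(P_tf,P_tf)/P_tf\,d\mu$ and to convert the pointwise bound $\Gam{2}\geq c\,\Gam{1}$ into a bound on $-I'(t)$. That chain rule is precisely what the paper points out fails for the discrete $\GamV{1}$ (Remark \ref{rem:prodrule}), which is why none of your discrete manipulations transfer to the continuous statement, and conversely why the continuous argument cannot simply be transplanted to prove Theorem \ref{thm:lsi}. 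Finally, even if one reads your text as an attempt at Theorem \ref{thm:lsi}, it defers the one genuinely hard step --- exhibiting the nonnegative decomposition of $-c\,I(0)-\frac{d}{dt}\big|_{t=0}\mathcal{D}(P_tf)$, which in the paper is the identity \eqref{eq:2ndder}--\eqref{eq:2ndder2} together with the verification $w(U;s)\leq 0$ of Lemma \ref{lem:tech} --- so it is incomplete there as well.
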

If $\mu$ is Gaussian with variance $\sigma^2$, since (as discussed in Example \ref{ex:U2bound})
we take $c=1/\sigma^2$, and the RHS becomes the standardized Fisher information $\int f'(x)^2/f(x) d\mu(x)$, we 
recover the original log-Sobolev inequality of Gross \cite{gross} (see also Stam \cite{stam}).
\section{Birth and death Markov chain} \label{sec:discretemc}
Fix a probability mass function $V$ supported on the whole of $\Z_+$. As in \cite{caputo}, we construct a  birth and death Markov chain with
invariant distribution $V$.  In \cite{caputo} more general upwards jump rates are considered, but this construction is 
sufficient for our purposes.
\begin{definition} \label{def:bdmc}
Define the birth and death Markov chain with  upward jumps rate equal to 1, and downward jump rate at $x$ equal to $V(x-1)/V(x)$.
Equivalently,  define the $Q$-matrix:
\begin{equation} \label{eq:Qmatrix} Q := \left( \begin{array}{ccccc} 
-1 & 1 & 0 & 0 &  \ldots \\
\frac{V(0)}{V(1)} &  - \frac{V(0)}{V(1)} - 1 & 1 & 0 & \ldots \\
0 & \frac{V(1)}{V(2)} &  - \frac{V(1)}{V(2)} - 1 & 1 &  \ldots \\
& \vdots & \vdots & \vdots & \\ 
\end{array} \right). \end{equation}
We consider evolution of probability mass functions by $p_t := p \exp(t Q) $, so that for any $x$:
\begin{equation} \label{eq:probevol}
\frac{\partial}{\partial t} p_t(x) = p_t Q = p_t(x-1) - \left(1 + \frac{V(x-1)}{V(x)} \right) p_t(x) + \frac{ V(x)}{V(x+1)} p_t(x+1)
= L_V^* p_t(x),
\end{equation}
using the notation of Definition \ref{def:lv}.
\end{definition}
\begin{example} \label{ex:vpoisson}
 If $V$ is Poisson $\Pi_{\lambda}$, then Equation \eqref{eq:probevol} becomes
$$ \frac{\partial}{\partial t} p_t(x) = p_t Q = p_t(x-1) - \left(1 + \frac{x}{\lambda} \right) p_t(x) + \frac{ (x+1)}{\lambda}
p_t(x+1),$$
as in \cite[Equation (14)]{johnson21}, giving the evolution of the $M/M/\infty$ queue. In \cite{johnson21}, the action of
this Markov chain was used to prove the maximum entropy property of the Poisson distribution, under the ultra-log-concavity
condition (Condition \ref{cond:ulc} below). 
\end{example}

Writing vector $\vc{V} = (V(0), V(1), V(2), \ldots )$ the $\vc{V} Q = 0$, so
$V$ is indeed the invariant distribution of this Markov chain. Indeed, the Markov chain satisfies the detailed balance condition,
and hence is reversible.
Further, since $V$ is supported on the whole of $\Z_+$, the Markov chain is irreducible, and
we deduce that this invariant measure is unique, meaning that the probabilities $p_t(x) \rightarrow V(x)$
as $t \rightarrow \infty$. Since the rate of upward jumps is constant, the chain is non-explosive, since the expected time to
reach $\infty$ is $\sum_{x=0}^\infty 1/Q_{x ; x+1} = \sum_{x=0}^\infty 1 = \infty$.

In fact, here it is more useful to consider the evolution of functions.
\begin{definition} Given a function $f$, consider the sequence of functions $f_t$ evolving as $\exp(t Q) f$, so that 
\begin{equation} \label{eq:funcevol}
\frac{\partial}{\partial t} f_t(x) = Q f_t(x) = f_t(x+1) - f_t(x) - \frac{V(x-1)}{V(x)} (f(x) - f(x-1)) = L_V f_t(x),
\end{equation}
where $L_V$ is the operator defined in Definition \ref{def:lv}.
\end{definition}
Next we give a result which allows us to prove the equivalent of  Example \ref{ex:U2bound} above.
\begin{lemma} \label{lem:Laction}
Observe that for any functions $f$ and $g$,  rearrangement gives that $L_V$ is self-adjoint with respect to $V$ where, writing
 $\Delta f(x) = f(x+1) -f(x)$,
\begin{equation} \label{eq:fgdiff}
\sum_{x=0}^\infty V(x) f(x)  L_V g(x) = \sum_{x=0}^\infty V(x) L_V f(x)  g(x) =  - \sum_{x=0}^\infty V(x) \Delta f(x) \Delta g(x). \end{equation}
\end{lemma}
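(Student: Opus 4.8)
The plan is to verify the two equalities in \eqref{eq:fgdiff} by direct manipulation of the defining formula \eqref{eq:lvdef} for $L_V$, using summation by parts (Abel summation) and the convention $V(-1)=0$. The key structural observation is that, by \eqref{eq:lvdef}, we can write
$$V(x) L_V g(x) = V(x)\bigl(g(x+1)-g(x)\bigr) - V(x-1)\bigl(g(x)-g(x-1)\bigr) = V(x)\Delta g(x) - V(x-1)\Delta g(x-1).$$
Thus the quantity $V(x)\,L_V g(x)$ is itself a \emph{difference} of the sequence $x \mapsto V(x-1)\Delta g(x-1)$ (evaluated at $x+1$ minus at $x$, up to relabelling). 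This telescoping structure is what makes the identity work.

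First I would substitute this expression into $\sum_{x=0}^\infty V(x) f(x) L_V g(x)$, obtaining $\sum_{x=0}^\infty f(x)\bigl(V(x)\Delta g(x) - V(x-1)\Delta g(x-1)\bigr)$. Then I would perform summation by parts: reindex the second sum by $x \mapsto x+1$ (the $x=0$ term vanishes because $V(-1)=0$), so that the whole sum collapses to $-\sum_{x=0}^\infty V(x)\Delta g(x)\bigl(f(x+1)-f(x)\bigr) = -\sum_{x=0}^\infty V(x)\Delta f(x)\Delta g(x)$. This establishes the first equality and simultaneously the rightmost expression. The second equality (self-adjointness, $\sum V(x) f(x) L_V g(x) = \sum V(x) L_V f(x)\, g(x)$) then follows immediately, since the final expression $-\sum_{x=0}^\infty V(x)\Delta f(x)\Delta g(x)$ is manifestly symmetric in $f$ and $g$; alternatively one can note it is exactly the detailed-balance/reversibility statement already recorded for the chain.

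The main obstacle — really the only point requiring care rather than routine algebra — is justifying the rearrangement of the infinite sums: the telescoping and reindexing steps implicitly require that boundary terms at infinity vanish and that the series converge absolutely (or at least that the manipulations are legitimate). Since the lemma is stated for general $f,g$ without decay hypotheses, I would either restrict attention to finitely supported $f,g$ (or $f,g$ with suitable decay relative to $V$), which is the regime in which the identity is actually applied later, or remark that the identity holds formally and that all subsequent uses involve functions for which the sums converge. With that caveat the proof is a two-line computation: expand via \eqref{eq:lvdef}, sum by parts using $V(-1)=0$, and read off both the self-adjointness and the Dirichlet-form expression.
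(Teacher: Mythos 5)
Your proposal is correct and follows essentially the same route as the paper: expand $V(x)L_Vg(x)$ via \eqref{eq:lvdef}, shift the index of the second sum using $V(-1)=0$, and combine to obtain $-\sum_{x=0}^\infty V(x)\Delta f(x)\Delta g(x)$, with self-adjointness read off from the symmetry of that expression. Your additional remark about convergence and vanishing boundary terms at infinity is a reasonable caveat that the paper leaves implicit.
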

\begin{proof}
This follows by adjusting the index of summation since
\begin{eqnarray*}
\lefteqn{ 
\sum_{x=0}^\infty V(x) f(x)  L_V g(x) } \\
 & = &
\sum_{x=0}^\infty V(x) f(x) \left( g(x+1) - g(x)  - \frac{V(x-1)}{V(x)} \left( g(x) - g(x-1) \right) \right)  \\
& = & \sum_{x=0}^\infty V(x) f(x) \left( g(x+1) - g(x) \right) - \sum_{x=0}^\infty V(x) f(x+1) \left( g(x+1) - g(x) \right) , 
\end{eqnarray*}
and the result follows.
\end{proof}
\section{Integrated Bakry-\'{E}mery condition} \label{sec:dbec}
Given the operator $L_V$, we define the $\GamV{1}$ and $\GamV{2}$ operators induced by it  in the standard way introduced
by \cite{bakry}.
\begin{definition} \label{def:gamv1} For any functions $f$ and $g$, write
\begin{eqnarray} 
\GamV{1}(f,g)  &= & \frac{1}{2} \left[ L_V(f g) - f L_V g - g L_V f \right]    \label{eq:gamv1} \\
\GamV{2}(f,g) & = & \frac{1}{2} \left[ L_V \left( \GamV{1}(f , g) \right) -  \GamV{1} (f, L_V g) -  \GamV{1}(g, L_V f) \right]    \label{eq:gamv2} 
\end{eqnarray}
\end{definition}
We next introduce the Integrated Bakry-\'{E}mery condition; note that in contrast to the classical Bakry-\'{E}mery condition 
(Condition \ref{cond:be}) we only require control of the average (with respect to $V$) of $\GamV{2}$ and $\GamV{1}$, not
pointwise control.
\begin{condition}[Integrated BE($c$)]
\label{cond:dbec}
We say that probability mass function $V$ satisfies the integrated BE($c$) condition if for all functions $f$:
\begin{equation} \label{eq:dbec} \sum_{x=0}^\infty V(x) \GamV{2}(f,f)(x) \geq c \sum_{x=0}^\infty V(x) \GamV{1}(f,f)(x) \end{equation}
\end{condition}

\begin{proposition} \label{prop:clogcgivesdbec}
  For any $f$ and $g$, writing  $L f(x) = f(x+1) - 2 f(x) + f(x-1)$  we deduce:
\begin{eqnarray} 
 \sum_{x=0}^\infty V(x) \GamV{1}(f,g)(x) & = & \sum_{x=0}^\infty V(x) (f(x+1) - f(x))(g(x+1) - g(x)), \label{eq:gamv1sum} \\
\sum_{x=0}^\infty V(x) \GamV{2}(f,g)(x) & =  &  \sum_{x=0}^\infty  V(x)  Lf(x+1) Lg(x+1)  \nonumber \\
&  & +  \sum_{x=0}^\infty  V(x) \EE(x) (f(x+1) - f(x))(g(x+1) - g(x)) . \label{eq:gamv2sum}
\end{eqnarray}
Hence,  if $V$ is $c$-log-concave (if $\EE(x) \geq c$ for all $x$)
then the integrated BE($c$) condition holds.
\end{proposition}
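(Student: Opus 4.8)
The plan is to prove the two displayed identities \eqref{eq:gamv1sum} and \eqref{eq:gamv2sum} by summation-by-parts manipulations built on Lemma \ref{lem:Laction}, and then to obtain the integrated BE($c$) condition by setting $f=g$ and discarding a nonnegative term. Assume throughout that the sums in sight converge absolutely, so that reindexing is legitimate; note that \eqref{eq:fgdiff} applied with $f\equiv 1$ gives $\sum_x V(x) L_V h(x) = 0$ for every $h$, which we use repeatedly. For \eqref{eq:gamv1sum}: expand $\GamV{1}(f,g)$ using \eqref{eq:gamv1} and sum against $V$; the term $\sum_x V(x) L_V(fg)(x)$ vanishes, while $-\sum_x V(x) f(x) L_V g(x)$ and $-\sum_x V(x) g(x) L_V f(x)$ each equal $\sum_x V(x)\Delta f(x)\Delta g(x)$ by \eqref{eq:fgdiff}, and the $\tfrac12$ gives \eqref{eq:gamv1sum}. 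The same mechanism handles $\GamV{2}$ at one further remove: summing \eqref{eq:gamv2} against $V$, the $L_V(\GamV{1}(f,g))$ term integrates to zero, and \eqref{eq:gamv1sum} followed by \eqref{eq:fgdiff} (with $L_V g$ in the role of $g$) gives $\sum_x V(x)\GamV{1}(f,L_V g)(x) = \sum_x V(x)\Delta f(x)\Delta(L_V g)(x) = -\sum_x V(x) L_V f(x)\,L_V g(x)$, with the companion term $\GamV{1}(g,L_V f)$ contributing the same; hence the intermediate identity
\[ \sum_x V(x)\GamV{2}(f,g)(x) = \sum_x V(x) L_V f(x)\,L_V g(x). \]

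It then remains to rewrite this right-hand side into the form \eqref{eq:gamv2sum}, and the index bookkeeping here is the step I expect to be the \emph{main obstacle} --- routine but with boundary terms at $x=0$ that must be tracked. The key rewriting, a restatement of detailed balance, is $V(x) L_V f(x) = V(x)\Delta f(x) - V(x-1)\Delta f(x-1)$, immediate from \eqref{eq:lvdef} with $V(-1)=0$. Setting $A(x)=V(x)\Delta f(x)$ and $B(x)=V(x)\Delta g(x)$, we get $\sum_x V(x) L_V f(x) L_V g(x) = \sum_x V(x)^{-1}\bigl(A(x)-A(x-1)\bigr)\bigl(B(x)-B(x-1)\bigr)$; expanding into four sums and shifting the index in the three involving $A(x-1)$ or $B(x-1)$ (the $x=0$ contributions vanish since $V(-1)=0$) turns this into
\[ \sum_x V(x)\Delta f(x)\Delta g(x) + \sum_x \tfrac{V(x)^2}{V(x+1)}\Delta f(x)\Delta g(x) - \sum_x V(x)\bigl(\Delta f(x+1)\Delta g(x) + \Delta f(x)\Delta g(x+1)\bigr). \]
Separately, since $Lf(x+1)=\Delta f(x+1)-\Delta f(x)$, a further reindexing gives $\sum_x V(x) Lf(x+1) Lg(x+1) = \sum_x V(x-1)\Delta f(x)\Delta g(x) - \sum_x V(x)\bigl(\Delta f(x+1)\Delta g(x)+\Delta f(x)\Delta g(x+1)\bigr) + \sum_x V(x)\Delta f(x)\Delta g(x)$. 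Subtracting this from the previous display leaves exactly $\sum_x \bigl(\tfrac{V(x)^2}{V(x+1)} - V(x-1)\bigr)\Delta f(x)\Delta g(x) = \sum_x V(x)\EE(x)\Delta f(x)\Delta g(x)$ by \eqref{eq:EEdef}, which is \eqref{eq:gamv2sum}.

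Finally, for the stated implication, put $f=g$ in \eqref{eq:gamv2sum}: the first sum $\sum_x V(x)\bigl(Lf(x+1)\bigr)^2$ is nonnegative, and if $\EE(x)\geq c$ for all $x$ the second sum is at least $c\sum_x V(x)\bigl(\Delta f(x)\bigr)^2 = c\sum_x V(x)\GamV{1}(f,f)(x)$ by \eqref{eq:gamv1sum}. Hence $\sum_x V(x)\GamV{2}(f,f)(x) \geq c\sum_x V(x)\GamV{1}(f,f)(x)$, which is precisely the integrated BE($c$) condition \eqref{eq:dbec}.
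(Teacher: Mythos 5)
Your proposal is correct and follows essentially the same route as the paper: both arguments integrate the definitions of $\GamV{1}$ and $\GamV{2}$ against $V$, use $\sum_x V(x) L_V h(x)=0$ together with Lemma \ref{lem:Laction} to reduce to $\sum_x V(x)\Delta f(x)\Delta g(x)$ and (in effect) $\sum_x V(x) L_V f(x) L_V g(x)$, and then extract the $Lf(x+1)Lg(x+1)$ and $\EE(x)\Delta f(x)\Delta g(x)$ pieces. The only difference is cosmetic: where you expand the telescoping products $\bigl(A(x)-A(x-1)\bigr)\bigl(B(x)-B(x-1)\bigr)$ directly, the paper substitutes the pointwise identity \eqref{eq:diffLV} for $L_V f(x+1)-L_V f(x)$ and relabels, and your index bookkeeping (including the $V(-1)=0$ boundary terms) checks out.
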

\begin{proof}
Observe that,  the $\EE$ term naturally emerges here and defines
a curvature term, since \eqref{eq:diffLV} expresses the difference between two adjacent derivatives:
\begin{eqnarray} \label{eq:diffLV}
L_V f(x+1) - L_V f(x) & = &  Lf(x+1)  - Lf(x) \frac{V(x-1)}{V(x)}  - \EE(x) (f(x+1) - f(x)).\;\; 
\end{eqnarray}
Using Lemma \ref{lem:Laction}, since $\sum_{x=0}^\infty V(x) L_V h(x) 
= 0$ for any function $h$, we know 
\begin{eqnarray}
\sum_{x=0}^\infty V(x) \GamV{1}(f,g)(x) = - \sum_{x=0}^\infty V(x) f(x) L_V g(x), \label{eq:usual}
\end{eqnarray}
and \eqref{eq:gamv1sum} follows by \eqref{eq:fgdiff}.
Multiplying by $V(x)$ and summing, we recover \eqref{eq:gamv1sum} (as suggested by Lemma \ref{lem:Laction}). 
Using \eqref{eq:gamv1sum}, similarly we know that $\sum_{x=0}^\infty V(x) \GamV{2}(f,g)(x)$ equals
\begin{eqnarray}
\lefteqn{
 - \frac{1}{2} \sum_{x=0}^\infty V(x) \GamV{1}(f, L_V g)(x)  - \frac{1}{2} \sum_{x=0}^\infty V(x) \GamV{1}(L_V f, g)(x)}  \label{eq:equalterms} \\
& = & - \sum_{x=0}^\infty V(x) (g(x+1) - g(x)) \left( L_V f(x+1) - L_V f(x) \right). \label{eq:tosubin} \\
& = & - \sum_{x=0}^\infty V(x) (g(x+1) - g(x)) Lf(x+1) +  \sum_{x=0}^\infty V(x-1) (g(x+1)-g(x)) Lf(x)\;\; \nonumber \\
& & \sum_{x=0}^\infty V(x) \EE(x) (g(x+1) - g(x)) (f(x+1) - f(x)) \nonumber \\
& = & \sum_{x=0}^\infty V(x) \left[ Lf(x+1) Lg(x+1) + \EE(x) (g(x+1) - g(x)) (f(x+1) - f(x)) \right] \label{eq:done}
\end{eqnarray}
where \eqref{eq:tosubin} follows by \eqref{eq:gamv1sum}, since the two terms in \eqref{eq:equalterms} are both equal (as \eqref{eq:usual} shows that as usual, they can both be expressed as 
$\frac{1}{2} \sum_{x=0}^\infty V(x) L_V f(x) L_V g(x)$).
The final result \eqref{eq:done} follows on relabelling, having substituted \eqref{eq:diffLV} in the second term of \eqref{eq:tosubin}.
 \end{proof}

\begin{remark} \label{rem:prodrule} 
Using \eqref{eq:gamv1sum} we deduce that $\GamV{1}$ only satisfies a modified form of the product rule in 
\eqref{eq:gamma1prodrule}. That is since $g(x+1) h(x+1) - g(x) h(x) = h(x+1) (g(x+1) - g(x)) + g(x) (h(x+1)-h(x))$ we know
that
\begin{eqnarray*}
 \sum_{x=0}^\infty V(x) \GamV{1}(f,g h)(x)   
& = & \sum_{x=0}^\infty V(x) (f(x+1) - f(x)) (g(x+1) - g(x))  h(x+1) \\
& & + \sum_{x=0}^\infty V(x) (f(x+1) - f(x)) (h(x+1)-h(x)) g(x)
\end{eqnarray*}
\end{remark}

\section{The $c$-log-concavity condition} \label{sec:dbeulc}
The $c$-log-concavity property (Condition \ref{cond:clc})  corresponds to the bound $U''(x) \geq c$ discussed in 
Example \ref{ex:U2bound}. Condition \ref{cond:clc} was introduced as Assumption A in \cite{caputo}, who showed that it is implied
by the ultra-log-concavity condition of Pemantle \cite{pemantle} and Liggett
\cite{liggett}:
\begin{condition}[ULC]
\label{cond:ulc} If a probability mass function $V$ has the property that $V/\Pi_{\lambda}$ is a log-concave sequence, then we
say that $V$ is ultra-log-concave (ULC).
\end{condition}
\begin{lemma}[\cite{caputo}, Section 3.2] \label{lem:ulcgivesclogc} If $V$ is ULC, then it is $c$-log-concave, with $c = V(0)/V(1)$. \end{lemma}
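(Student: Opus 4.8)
The plan is to reduce the ULC hypothesis to a convenient inequality between consecutive ratios $V(x)/V(x+1)$, and then to run a short induction on $x$ exploiting the telescoping structure of $\EE$ already visible in the second expression in \eqref{eq:EEdef}. First I would observe that, because the factor $e^{\lambda}\lambda^{-x}$ appearing in $1/\Pi_{\lambda}(x)$ is log-affine in $x$, the second (discrete) difference of $\log\bigl(V(x)/\Pi_{\lambda}(x)\bigr)$ equals that of $\log\bigl(x!\,V(x)\bigr)$; hence $V/\Pi_{\lambda}$ is a log-concave sequence if and only if $\bigl(x!\,V(x)\bigr)_{x\ge 0}$ is, i.e.
\begin{equation*}
x\,V(x)^2 \;\ge\; (x+1)\,V(x-1)\,V(x+1) \qquad (x\ge 1).
\end{equation*}
Dividing through by $V(x)V(x+1)>0$ (recall $V$ has full support), this is exactly
\begin{equation*}
\frac{V(x)}{V(x+1)} \;\ge\; \frac{x+1}{x}\,\frac{V(x-1)}{V(x)} \qquad (x\ge 1). \tag{$\star$}
\end{equation*}

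Next I would record the telescoping identity that follows directly from the second form of $\EE$ in \eqref{eq:EEdef}: using the convention $V(-1)=0$, for every $x\ge 1$,
\begin{equation*}
\sum_{y=0}^{x-1}\EE(y)
\;=\;\sum_{y=0}^{x-1}\left(\frac{V(y)}{V(y+1)}-\frac{V(y-1)}{V(y)}\right)
\;=\;\frac{V(x-1)}{V(x)}.
\end{equation*}
I would then prove $\EE(x)\ge c:=V(0)/V(1)$ for all $x\in\Z_{+}$ by induction on $x$. The base case is immediate, since $\EE(0)=V(0)/V(1)=c$ (again by $V(-1)=0$). For the inductive step, assume $\EE(y)\ge c$ for all $0\le y\le x-1$; the telescoping identity then gives $V(x-1)/V(x)\ge x c$, and $(\star)$ yields
\begin{equation*}
\EE(x)=\frac{V(x)}{V(x+1)}-\frac{V(x-1)}{V(x)}
\;\ge\;\left(\frac{x+1}{x}-1\right)\frac{V(x-1)}{V(x)}
=\frac{1}{x}\cdot\frac{V(x-1)}{V(x)}\;\ge\;\frac{1}{x}\cdot x c = c,
\end{equation*}
which closes the induction and gives $c$-log-concavity with the stated constant.

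The argument is essentially mechanical, and the only place needing any care is the opening reduction: verifying that ULC as stated (log-concavity of $V/\Pi_{\lambda}$) is genuinely equivalent to $(\star)$, independently of $\lambda$, and keeping the boundary convention $V(-1)=0$ consistent so that both the base case and the telescoping sum come out cleanly. One could bypass the induction entirely by iterating $(\star)$ down to $V(0)/V(1)$ to obtain $V(x-1)/V(x)\ge x\,V(0)/V(1)$ and substituting this into the one-line estimate above, but the inductive phrasing keeps the index bookkeeping minimal.
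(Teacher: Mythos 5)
Your proof is correct: the reduction of ULC to the inequality $(\star)$, the telescoping identity, and the induction all check out, and the argument is essentially the standard one --- the paper does not reprove this lemma but cites Section 3.2 of Caputo, Dai Pra and Posta, whose computation amounts to the same observation, namely that ULC makes $x \mapsto \frac{1}{x}\frac{V(x-1)}{V(x)}$ nondecreasing, whence $\EE(x) \geq \frac{1}{x}\frac{V(x-1)}{V(x)} \geq \frac{V(0)}{V(1)}$. As you note yourself, the induction is superfluous (iterating $(\star)$, or equivalently the monotonicity just mentioned, gives the bound in one line), but that is purely cosmetic.
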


Notice that if $U$ and $V$ are probability mass functions then 
$ \diy \frac{ (U \star V)(1)}{ (U \star V)(0)} = \frac{ U(1)}{U(0)} + \frac{ V(1)}{V(0)}, \eiy$
where $(U \star V)$ represents the convolution.
In the light of Lemma \ref{lem:ulcgivesclogc} this suggests the conjecture that if $U$ and $V$ are $c$-log-concave with
constants $c_U$ and $c_V$ respectively, then $(U \star V)$ is $c$-log-concave with constant $\geq (1/c_U + 1/c_V)^{-1}$.
(Recall that Walkup \cite{walkup} proved a result which implies that if $U$ and $V$ are ULC, then so is $(U \star V)$.)

We discuss probability
mass functions $V$ for which Condition \ref{cond:clc} is satisfied. While Theorem \ref{thm:lsi} requires that $V$ has support the whole of $\Z_+$, it is 
still instructive to take $V$ with finite interval support (see Remark \ref{rem:general}).
\begin{example} \label{ex:clogc} \mbox{ } 
\begin{enumerate}
\item If $V = \Pi_\lambda$ is Poisson, then since $V(x)/V(x+1) = (x+1)/\lambda$, we know that $\EE(x) \equiv 1/\lambda$, so
$V$ is $c$-log-concave (with equality), with $c = 1/\lambda$.
\item By Lemma \ref{lem:ulcgivesclogc}, the probability mass function $V$ of the sum of independent Bernoulli variables with mean $p_i$, is
$c$-log-concave with $c = \left( \sum_j p_j/(1-p_j) \right)^{-1}$.
\item If $V(x) = \binom{n+x-1}{x} p^x (1-p)^n$ is negative binomial, then direct calculation gives
$\diy \EE(x) = \frac{(n-1)}{p( n+x) (n+x-1)}, \eiy$
which tends to zero as $x \rightarrow \infty$. Hence $V(x)$ is only $c$-log-concave with $c = 0$.
\end{enumerate}
\end{example}

One final remark is that no mass function with mean $\ep V$ can be $c$-log-concave for $c > 1/(\ep V)$. Hence the value $1/\lambda$
found for $\Pi_\lambda$ in Example \ref{ex:clogc} is an extreme one.
\begin{lemma} \label{lem:cbound}
 If $V$ is $c$-log-concave, then $c \leq 1/(\ep V)$. \end{lemma}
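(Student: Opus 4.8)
The plan is to use the telescoping structure of $\EE(x)$ from Equation \eqref{eq:EEdef}. Recall that $\EE(x) = V(x)/V(x+1) - V(x-1)/V(x)$, so writing $r(x) := V(x-1)/V(x)$ (with $r(0) = 0$ by the convention $V(-1) = 0$) we have $\EE(x) = r(x+1) - r(x)$. Under the $c$-log-concavity hypothesis $\EE(x) \geq c$ for every $x \in \Z_+$, and summing this bound over $x = 0, 1, \ldots, N-1$ telescopes to give $r(N) = \sum_{x=0}^{N-1} \EE(x) \geq cN$; that is, $V(N-1)/V(N) \geq cN$ for every $N \geq 1$.

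From here I would translate the pointwise bound $V(N-1)/V(N) \geq cN$ into a statement about the mean. One clean route is to compare $V$ with a geometric-type decay: iterating $V(N) \leq V(N-1)/(cN)$ gives $V(N) \leq V(0)/(c^N N!)$, which already shows $V$ has finite (indeed super-exponential) moments so that $\ep V = \sum_{x \geq 1} x V(x)$ is well-defined and finite, avoiding any convergence worries. The main computation is then to bound $\ep V$: write $\ep V = \sum_{x=1}^\infty x V(x)$ and, using $V(x) \leq V(x-1)/(cx)$, obtain $x V(x) \leq V(x-1)/c$, so that $\ep V = \sum_{x=1}^\infty x V(x) \leq \frac{1}{c}\sum_{x=1}^\infty V(x-1) = \frac{1}{c}\sum_{y=0}^\infty V(y) = \frac{1}{c}$. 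Rearranging gives $c \leq 1/(\ep V)$, which is exactly the claim.

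An alternative, perhaps more transparent, phrasing of the same idea: the bound $V(x-1) \geq c\, x\, V(x)$ summed against constant weights telescopes directly. Indeed $\sum_{x=1}^\infty c\, x\, V(x) \leq \sum_{x=1}^\infty V(x-1) = 1$, and the left side is $c\, \ep V$. Either way the heart of the argument is the single inequality $V(x-1)/V(x) \geq cx$, obtained by telescoping the defining identity for $\EE$, combined with the trivial observation that $\sum_x V(x) = 1$.

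I do not expect a serious obstacle here; the only point requiring a little care is justifying that $\ep V < \infty$ (and hence that the rearrangements of series are legitimate), but as noted the bound $V(N) \leq V(0)/(c^N N!)$ settles this immediately when $c > 0$, and when $c \leq 0$ the conclusion $c \leq 1/(\ep V)$ is vacuous since $\ep V > 0$. One should also handle the degenerate possibility $\ep V = 0$ (i.e. $V = \delta_0$), but a point mass at $0$ is excluded anyway since $V$ is assumed supported on all of $\Z_+$, so $\ep V > 0$ strictly and the division is harmless.
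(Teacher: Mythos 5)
Your argument is correct and is essentially the paper's own proof: telescope $\EE$ to get $V(x)/V(x+1) \geq (x+1)c$, then multiply through and sum against $\sum_x V(x) = 1$ to obtain $c\,\ep V \leq 1$. The extra remarks on finiteness of $\ep V$ and the trivial cases $c \leq 0$ are fine but not needed for the core argument.
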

\begin{proof}
Since $\EE$ is a finite difference, we sum the collapsing sum to obtain
$$ \frac{V(x)}{V(x+1)} = \sum_{y=0}^{x} \left( \frac{V(y)}{V(y+1)} - \frac{V(y-1)}{V(y)} \right)
\geq (x+1) c,$$
by assumption. Rearranging and summing we obtain that
$$ 1 = \sum_{x=0}^\infty V(x) \geq \sum_{x=0}^\infty  (x+1) V(x+1)  c = c (\ep V),$$
and the result follows.
\end{proof}

Note further that in some settings it may be natural to assume that $\EE(x)$ is increasing in $x$.  Direct substitution shows that
this is equivalent to the property that
\begin{equation} \label{eq:increase}
V(x)^2 V(x-1)  - 2 V(x-1)^2 V(x+1) + V(x+1) V(x) V(x-2) \geq 0, \mbox{\;\;\;\; for all $x \geq 0$.}
\end{equation}
In \cite{johnson34}, this property (referred to there as `Property $C_1(k)$') is shown by induction to hold when $V$ is
the probability mass function of the sum of independent Bernoulli variables, and it is natural to assume that \eqref{eq:increase}
holds in a more general setting than this.

\section{Proof of the log-Sobolev inequality, Theorem \ref{thm:lsi}} \label{sec:prooflsi}
\begin{proof}[Proof of  Theorem \ref{thm:lsi}]
Given a fixed probability mass fucntion $V(x)$ and a function $f$ with $\sum_{x=0}^\infty V(x) f(x) = \mu_{V,f}$, we consider function $f_t$
evolving as \eqref{eq:funcevol}, that is with $f_0 \equiv f$ and 
$$ \frac{\partial}{\partial t} f_t(x) =  L_V f_t(x).$$
Note that, by ergodicity, $\lim_{t \rightarrow \infty} f_t(x) = \sum_{x=0}^\infty V(x) f(x) = \mu_{V,f}$.
We consider the function 
\begin{equation}  \Theta(t) = \sum_{x=0}^\infty V(x) f_t(x) \log f_t(x),\end{equation}
and  obtain that (as in \cite{caputo}):
\begin{eqnarray}
\Theta'(t) & = & \sum_{x=0}^\infty V(x) L_V f_t(x) \log f_t(x) + \sum_{x=0}^\infty V(x) f_t(x) \frac{L_V f_t(x)}{f_t(x)} \nonumber \\
& = & - \sum_{x=0}^\infty V(x) \left( f_t(x+1) - f_t(x) \right) \left( \log f_t(x+1) - \log f_t(x) \right). \label{eq:thetaprime}
\end{eqnarray}
This follows by cancellation, since $\sum_{x=0}^\infty V(x) L_V h(x) = 0$ for any $h$,
and by taking $f=f_t$ and $g =\log f_t$ in \eqref{eq:fgdiff}.
Since both terms in brackets in \eqref{eq:thetaprime}
have the same sign, we  conclude that $\Theta'(t) = -\sum_{x=0}^\infty V(x) \GamV{1}(f_t, \log f_t)(x) \leq 0$ (this is the term arising in \eqref{eq:lsicap}).
 However the absence of a chain rule of the type \eqref{eq:gamma1chainrule}
means that we cannot write it in a form where Condition \ref{cond:dbec} can be directly applied.
However, we calculate a further derivative by hand.

In fact, we consider the derivative of a related term, which we think of as only part of the expression for $\Theta'(t)$. That
is, we write
$$ \psi(t) = \sum_{x=0}^\infty V(x) \left( f_t(x+1) \log \left( \frac{ f_t(x+1)}{f_t(x)} \right) - f_t(x+1) + f_t(x) \right).$$
Using the fact that for functions $g$ and $h$, $(g \log(g/h) - g)' = g' \log(g/h) - g h'/h$, 
by relabelling in the usual way we  deduce that
\begin{eqnarray}
\psi'(t) & = & \sum_{x=0}^\infty V(x)  \left( L_V f_t(x+1) - L_V f_t(x) \right) \log \left( \frac{ f_t(x+1)}{f_t(x)} \right)  \label{eq:term1} \\
& & +  \sum_{x=0}^\infty V(x) L_V f_t(x) \left(  \log \left( \frac{f_t(x+1)}{f_t(x)} \right) - \frac{f_t(x+1)}{f_t(x)} \right) \label{eq:term2}
\end{eqnarray}
By taking $g(x) = \log f_t(x)$ and $ f(x) = f_t(x)$ in \eqref{eq:tosubin}, we deal with \eqref{eq:term1}, and by taking 
$f (x)= f_t(x)$ and $ \diy g(x) =  \log \left( \frac{f_t(x+1)}{f_t(x)} \right) - \frac{f_t(x+1)}{f_t(x)} \eiy$ in \eqref{eq:fgdiff}, we
deal with \eqref{eq:term2}. Adding the results of these manipulations together, we deduce that
\begin{eqnarray}
\psi'(t) & = & - \sum_{x=0}^\infty V(x) \EE(x) (f_t(x+1) - f_t(x) )  \log \left( \frac{ f_t(x+1)}{f_t(x)} \right) \label{eq:2ndder} \\
& & \;\;\; + \sum_{x=0}^\infty V(x) f_t(x+1) w \left( \frac{f_t(x) f_t(x+2)}{f_t(x+1)^2} ; \frac{f_t(x)}{f_t(x+1)} \right), \label{eq:2ndder2}
\end{eqnarray}
where $w(U; s) = -(U/s - 1) \log U + (1-U)(1- 1/s)$.
  Lemma \ref{lem:tech} below gives that the term  \eqref{eq:2ndder2} is negative. (Note that this term is zero if $f_t(x) = \exp(a x + b)$,
which contributes to the sharpness result discussed in Remark \ref{rem:compare}).
 Further, by assumption, we  bound \eqref{eq:2ndder} from above
on replacing $\EE(x)$ by $c$. In other words, we deduce by comparison with \eqref{eq:thetaprime} that
$\psi'(t) \leq c \Theta'(t)$, or that $\diy (-\Theta'(t)) \leq \frac{1}{c} (-\psi'(t)) \eiy$.
We  deduce 
\begin{eqnarray*}
\Ent_V(f) =  \Theta(0) - \Theta(\infty) 
& = & \int_{0}^{\infty} -  \Theta'(t) dt \\
& \leq & \frac{1}{c} \int_0^\infty (-\psi'(t)) dt =  \frac{1}{c} \psi(0) 
\end{eqnarray*}
and the result follows.
\end{proof}
\begin{lemma} \label{lem:tech}
The function $w(U; s) = -(U/s - 1) \log U + (1-U)(1- 1/s) \leq 0$ for all $s, U \geq 0$,
with equality if and only if $U = 1$.
\end{lemma}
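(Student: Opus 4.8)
The plan is to reduce $w$ to a sum of two manifestly non-positive pieces by an algebraic rearrangement, and then to invoke the elementary bound $\log t \le t - 1$ (with equality precisely at $t = 1$) twice — the very inequality already quoted in the statement of Theorem \ref{thm:lsi}.

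First I would set $a = 1/s$ and expand the two terms of $w$: writing $-(aU - 1)\log U = -aU\log U + \log U$ and $(1-U)(1-a) = 1 - U - a + aU$, then collecting the terms free of $a$ separately from those carrying a factor $a$, one obtains the identity
\begin{equation*}
w(U;s) \;=\; \bigl(\log U + 1 - U\bigr) \;+\; \frac{1}{s}\,\bigl(U - 1 - U\log U\bigr).
\end{equation*}
Next I would bound each bracket. The first bracket is $\le 0$: this is exactly $\log U \le U - 1$, with equality iff $U = 1$. For the second bracket, apply the same inequality with $1/U$ in place of $U$, namely $\log(1/U) \le 1/U - 1$, and multiply through by $U > 0$ to get $-U\log U \le 1 - U$, i.e. $U - 1 - U\log U \le 0$, again with equality iff $U = 1$. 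Since $1/s \ge 0$, both summands in the displayed identity are $\le 0$, whence $w(U;s) \le 0$.

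For the equality case: if $w(U;s) = 0$, then since both summands are non-positive the first one, $\log U + 1 - U$, must vanish, forcing $U = 1$; conversely, at $U = 1$ both brackets vanish and $w = 0$ for every $s$. The degenerate boundary values ($U = 0$, where $w = -\infty$, or $s = 0$) are dealt with by the same decomposition together with a limiting argument; and in the application inside the proof of Theorem \ref{thm:lsi} one always has $s, U > 0$, since $f$ takes strictly positive values. I do not anticipate a genuine obstacle here — the only point requiring a little care is getting the rearrangement into the displayed form and keeping track of which variable the elementary inequality is applied to in each of the two brackets.
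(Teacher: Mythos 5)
Your proof is correct, and it takes a genuinely different route from the paper. The paper's argument is a short calculus one: for fixed $s$ it checks $w(1;s)=0$ and $\frac{\partial}{\partial U} w(U;s)\big|_{U=1}=0$, and computes $\frac{\partial^2}{\partial U^2} w(U;s) = -(s+U)/(sU^2) < 0$, so that $w$ is strictly concave in $U$ with its maximum value $0$ attained only at $U=1$. You instead exhibit the algebraic identity
\begin{equation*}
w(U;s) = \bigl(\log U + 1 - U\bigr) + \frac{1}{s}\bigl(U - 1 - U\log U\bigr),
\end{equation*}
which I have checked, and control each bracket by the elementary bound $\log t \le t-1$ (applied once to $t=U$ and once to $t=1/U$). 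Both arguments are short and both pin down the equality case cleanly; yours has the advantage of being derivative-free and of displaying $w$ explicitly as a nonnegative combination of two copies of the same standard inequality already invoked in the statement of Theorem \ref{thm:lsi}, while the paper's concavity argument packages the whole claim into a single second-derivative computation and requires no rearrangement. Your remark on the boundary cases $U=0$ and $s=0$ is, if anything, more careful than the paper, which does not address them; as you note, only $s,U>0$ is ever used in the proof of Theorem \ref{thm:lsi}.
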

\begin{proof}
For fixed $s$, we observe that $w(1; s)  = 0$, that $\diy \frac{\partial}{\partial U} w(U;s) \big|_{U=1} = 0 \eiy$ and $w(U; s)$ is a strictly concave function, 
since $ \diy \frac{\partial^2}{\partial U^2} w(U; s) = - \frac{s + U}{s U^2} \eiy$.
\end{proof}

\begin{remark} \label{rem:general} If $V$ has support on a finite interval,  a version of Theorem
\ref{thm:lsi} should still hold, at least for a class of functions $f$. In brief, define $V_\epsilon := V \star \Pi_{\epsilon}$ to be the convolution of $V$ with a Poisson mass function
of mean $\epsilon$. If $V$ is $c$-log-concave, then for any given $\delta$, the $V_{\epsilon}$ will be $(c-\delta)$-log-concave
for $\epsilon$ sufficiently small. Hence, we can apply Theorem \ref{thm:lsi} to $V_{\epsilon}$ (which is supported on the whole
of $\Z_+$ as required) to obtain a bound on $\Ent_{V_\epsilon}(f)$.

 Further, by continuity arguments using dominated convergence  $\Ent_{V_\epsilon}(f)$ will converge to $\Ent_V(f)$ for well-behaved $f$, and the resulting
upper bound will also converge.  However, we omit further discussion of this and the correct class of $f$ to use for the sake 
of brevity.
\end{remark}

\section{Proof of the Poincar\'{e} inequality, Theorem \ref{thm:poincare}} \label{sec:proofpoinc}
We show that the Poincar\'{e} inequality is equivalent to the integrated BE($c$) condition, using a standard argument (see for example
 Proposition 4.8.3 of \cite{bakry2}).
\begin{proof}[Proof of Theorem \ref{thm:poincare}]
First, we assume the integrated BE($c$) condition, and write $\Lambda(t) = \sum_{x=0}^\infty V(x) f_t(x)^2$.
By Lemma \ref{lem:Laction} and \eqref{eq:gamv1sum}
\begin{eqnarray*}
\Lambda'(t) = 2 \sum_{x=0}^\infty V(x) f_t(x) L_V f_t(x) = -  2 \sum_{x=0}^\infty V(x) (\Delta f_t)(x)^2 = - 2 \sum_{x=0}^\infty V(x) \GamV{1}(f_t, f_t)
\end{eqnarray*}
Similarly, since $\frac{\partial}{\partial t}$ commutes with $L_V$ by the form of the $Q$-matrix in Definition \ref{def:bdmc},
\begin{eqnarray*}
\Lambda''(t) & = & 2 \sum_{x=0}^\infty V(x) L_V f_t(x) L_V f_t(x) + 2 \sum_{x=0}^\infty V(x) f_t(x) L_V^2  f_t(x) \\
& = & 4 \sum_{x=0}^\infty V(x) \left(  L_V f_t(x) \right)^2  
=  4 \sum_{x=0}^\infty V(x) \GamV{2}(f_t, f_t).
\end{eqnarray*}
The integrated BE($c$) condition applied to the function $f_t$ tells us that $\Lambda''(t) \geq -2 c \Lambda'(t)$. This tells us
that
\begin{eqnarray*}
\var_V(f) =  \Lambda(0) - \Lambda(\infty) & = &  \int_{0}^{\infty} -  \Lambda'(t) dt \\
& \leq & \frac{1}{2c} \int_0^\infty \Lambda''(t) dt 
=  \frac{1}{2c} \left( - \Lambda'(0)  \right) 
= \frac{1}{c} \sum_{x=0}^\infty V(x) (\Delta f)(x)^2,
\end{eqnarray*}
and the result follows.

Second, if the Poincar\'{e} inequality holds, we deduce the integrated BE($c$) condition, since without loss of 
generality we can consider
for any $f$ with 
$\sum_x V(x) f(x) = 0$, for which
\begin{eqnarray}
\sum_{x=0}^\infty V(x) \GamV{1}(f,f)(x) & = & - \sum_{x=0}^\infty V(x) f(x) L_V f(x) \label{eq:equiv1} \\
& \leq & \sqrt{ \sum_{x=0}^\infty  V(x) f(x)^2} \sqrt{ \sum_{x=0}^\infty V(x) L_V f(x)^2 } \label{eq:equiv2} \\
& \leq & \sqrt{ \frac{1}{c} \sum_{x=0}^\infty V(x) \GamV{1}(f,f)(x) }
\sqrt{
\sum_{x=0}^\infty V(x) \GamV{2}(f,f)(x) }, \label{eq:equiv3}
\end{eqnarray}
where \eqref{eq:equiv1} follows by  \eqref{eq:usual}, \eqref{eq:equiv2} follows by Cauchy--Schwarz,
  and \eqref{eq:equiv3} follows since
by \eqref{eq:poincare}
 $\sum_{x=0}^\infty V(x) f(x)^2 = \var_V(f) \leq \frac{1}{c} \sum_{x=0}^\infty V(x) \Delta f(x)^2 = \frac{1}{c} 
\sum_{x=0}^\infty V(x)
\GamV{1}(f,f)(x)$.
\end{proof}

Theorem \ref{thm:poincare} shows that if $V$ satisfies the integrated BE($c$) condition, then
the Poincar\'{e} constant of $V$ is  $\leq 1/c$. In comparison \cite[Corollary 2.4]{johnson24}, which was
proved using arguments based
on stochastic ordering and size-biasing, shows that
if $V$ is ULC then the Poincar\'{e} constant of $V$ is less than or equal to $\ep V$.
Lemma \ref{lem:ulcgivesclogc} and Proposition \ref{prop:clogcgivesdbec} show ULC  implies the integrated  BE($c$) condition,
 hence the assumptions of the present paper are weaker than in 
\cite{johnson24}. However, Lemma \ref{lem:cbound} shows that $\ep V \leq 1/c$,  so here we prove 
a weaker bound on the Poincar\'{e} constant. It would be of interest to know if the two approaches can be synthesised, or
if the results are each optimal under their own assumptions.

\section{Consequences of Theorem \ref{thm:lsi}} \label{sec:consequences}

We briefly discuss some results which follow from Theorem \ref{thm:lsi}, including a concentration of measure inequality,
decay of entropy and a form of hypercontractivity.

\subsection{Concentration of measure}
We prove a concentration of measure result  by adapting the argument used to prove \cite[Proposition 10]{bobkov3}, and deduce the following bound:
\begin{proposition} \label{prop:concmeas}
Fix  probability mass function $V$, and suppose that for all functions $f$ with positive values, Equation \eqref{eq:lsi} holds, that is:
\begin{equation} \label{eq:lsi2} \Ent_V(f) \leq \frac{1}{c} \sum_{x=0}^{\infty}
 V(x) f(x+1)  \left( \log \left( \frac{ f(x+1)}{f(x)} \right) - 1
+ \frac{f(x)}{f(x+1)} \right) . \end{equation}
Then, writing $h(s) = (1+s) \log (1+s) - s$,  for any function $g$ with $\sup_{x} |g( x+1) - g(x)| \leq 1$:
\begin{equation}
V \left( \left\{ g \geq \ep_V g + t \right\} \right)  \leq \exp \left( - \frac{ h(c t)}{c} \right).
\end{equation}
\end{proposition}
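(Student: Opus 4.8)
The plan is to follow the standard Herbst-type argument adapting \cite[Proposition 10]{bobkov3}. First I would fix a function $g$ with $\sup_x |g(x+1) - g(x)| \leq 1$ and, without loss of generality, assume $\ep_V g = 0$. Apply the log-Sobolev inequality \eqref{eq:lsi2} to $f(x) = \exp(\beta g(x))$ for $\beta \geq 0$, and introduce the Laplace transform $H(\beta) = \ep_V e^{\beta g} = \sum_x V(x) e^{\beta g(x)}$. The left-hand side of \eqref{eq:lsi2} becomes $\Ent_V(e^{\beta g}) = \beta H'(\beta) - H(\beta) \log H(\beta)$, which is the usual expression appearing in Herbst's argument.

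Next I would bound the right-hand side. With $f(x) = e^{\beta g(x)}$ and writing $d(x) = g(x+1) - g(x) \in [-1,1]$, the ratio $f(x+1)/f(x) = e^{\beta d(x)}$, so the summand is
\[
V(x) e^{\beta g(x+1)} \left( \beta d(x) - 1 + e^{-\beta d(x)} \right) = V(x) e^{\beta g(x)} e^{\beta d(x)} \left( \beta d(x) - 1 + e^{-\beta d(x)} \right).
\]
Setting $\phi(u) = e^{u}(u - 1 + e^{-u}) = u e^{u} - e^{u} + 1$ for $u \in [-1,1]$, the key elementary estimate is the bound $\phi(u) \leq u^2 \cdot \tfrac{\phi(1)}{1}$ or, more usefully, $\phi(\beta d(x)) \leq d(x)^2 \psi(\beta)$ for an appropriate increasing function $\psi$; in fact since $|d(x)|\le 1$ one checks $\phi(u) \le |u|\,(e^{|u|} - 1) \le |u| (e^{\beta}-1)$ when $|u| \le \beta$, giving the cleaner bound $\phi(\beta d(x)) \le \beta |d(x)| (e^\beta - 1) \le \beta(e^\beta-1)$. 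Either way this reduces the RHS to $\tfrac{1}{c}(e^\beta - 1 - \beta)\, \beta H'(\beta)$ after recognizing $\sum_x V(x) e^{\beta g(x)} g'(x)$-type sums as $H'(\beta)$ — I would need to be slightly careful that the correct sum that emerges is exactly $\beta H'(\beta)$ once one accounts for which argument ($x$ or $x+1$) $g$ is evaluated at, possibly after a shift of summation index. The upshot is a differential inequality of the form
\[
\beta H'(\beta) - H(\beta)\log H(\beta) \leq \frac{e^\beta - 1 - \beta}{c}\, \beta H'(\beta),
\]
equivalently $\left(1 - \tfrac{e^\beta-1-\beta}{c}\right)\beta H'(\beta) \le H(\beta)\log H(\beta)$, which after dividing and setting $L(\beta) = \tfrac{1}{\beta}\log H(\beta)$ yields $L'(\beta) \le $ (something explicit), integrable from $0$ with $L(0^+) = \ep_V g = 0$.

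Integrating the resulting bound on $L'(\beta)$ gives $\log \ep_V e^{\beta g} \leq$ an explicit function of $\beta$; then Markov's inequality $V(\{g \geq t\}) \leq e^{-\beta t}\ep_V e^{\beta g}$ optimized over $\beta \geq 0$ produces the stated rate $\exp(-h(ct)/c)$ with $h(s) = (1+s)\log(1+s) - s$. The appearance of $h$ is exactly what one expects: it is the Legendre transform relevant to the cumulant bound $(e^\beta - 1 - \beta)/c$, so the optimization step should close cleanly once the differential inequality is in hand. I expect the main obstacle to be the elementary convexity bookkeeping in the second step — namely pinning down the sharp inequality $\phi(\beta d) \le (e^\beta - 1 - \beta) d^2$ (or whichever variant makes the integration come out to exactly $h$), and making sure the index shifts relating $\sum_x V(x) e^{\beta g(x+1)}(\cdots)$ to $\beta H'(\beta)$ are done correctly so that no spurious boundary terms appear; the differential-inequality and optimization steps are then routine Herbst-argument manipulations.
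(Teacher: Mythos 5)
Your overall architecture --- Herbst's argument with $f = e^{\beta g}$, the identity $\Ent_V(e^{\beta g}) = \beta H'(\beta) - H(\beta)\log H(\beta)$ for the Laplace transform $H(\beta) = \sum_x V(x) e^{\beta g(x)}$, and the identification of the right-hand summand as $V(x)\, e^{\beta g(x)} \phi(\beta d(x))$ with $\phi(u) = u e^u - e^u + 1$ --- is exactly the paper's route. The gap is in the central bounding step, and it is not merely bookkeeping. Once you bound $\phi(\beta d(x))$ by a quantity depending only on $\beta$ (using $|d(x)| \le 1$), the right-hand side becomes that quantity times $\sum_x V(x) e^{\beta g(x)} = H(\beta)$ itself; no factor of $g$ survives that could turn the sum into $\beta H'(\beta)$. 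So the differential inequality you write, $\beta H' - H\log H \le \frac{e^\beta - 1 - \beta}{c}\,\beta H'$, does not follow from your estimates (and would in any case degenerate once $e^\beta - 1 - \beta > c$, when the prefactor on the left changes sign). Moreover the ``sharp'' inequality you hope to pin down, $\phi(\beta d) \le (e^\beta - 1 - \beta)\, d^2$, is false: at $\beta = d = 1$ the left side is $\phi(1) = 1$ while the right side is $e - 2 < 1$. Your fallback $\phi(\beta d) \le \beta(e^\beta - 1)$ is true but strictly weaker than what is needed, and integrating $(e^\beta-1)/(c\beta)$ does not produce $h(ct)/c$ after optimization.

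The correct elementary step is simpler than any of your candidates: since $\phi'(u) = u e^u$ has the sign of $u$, the maximum of $\phi$ on $[-\beta,\beta]$ is attained at an endpoint, and one checks $\phi(\beta) \ge \phi(-\beta)$ for $\beta \ge 0$, so $\phi(\beta d(x)) \le \phi(\beta) = \beta e^\beta - e^\beta + 1$. This gives, for $\Psi(\beta) := \log H(\beta)/\beta$, the inequality $\Psi'(\beta) \le \phi(\beta)/(c\beta^2)$, and $\phi(\beta)/\beta^2$ is exactly the derivative of $(e^\beta - \beta - 1)/\beta$, so integration yields the closed form $\log H(\sigma) \le \sigma\, \ep_V g + (e^\sigma - \sigma - 1)/c$. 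Markov's inequality with the choice $\sigma = \log(1+ct)$ then produces $\exp(-h(ct)/c)$ exactly. In short: replace your candidate bounds by the endpoint bound $\phi(\beta d) \le \phi(\beta)$ and keep the resulting differential inequality in terms of $H(\beta)$, not $H'(\beta)$; the Chernoff and optimization steps of your outline then close as you describe.
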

\begin{proof} Define the function $G(\tau) = \sum_{x=0}^\infty V(x) e^{ \tau g(x)}$, and the related function
$H(\tau) = \left( \log G(\tau) \right)/\tau$. Taking $f(x) = e^{\tau g(x)}$ in \eqref{eq:lsi2} we deduce that:
\begin{eqnarray}
\tau^2 G(\tau) H'(\tau) & = & \tau G'(\tau) - G(\tau) \log G(\tau)  \nonumber \\
& = & \sum_{x=0}^\infty V(x) \tau g(x)  e^{\tau g(x)} - G(\tau) \log G(\tau) \nonumber \\
& = & \Ent_V \left( e^{\tau g} \right) \nonumber \\
& \leq & \frac{1}{c} \sum_{x=0}^\infty V(x) \left[ \tau e^{\tau g(x+1)} (g(x+1) - g(x)) - e^{\tau g(x+1)} + e^{\tau g(x)} \right]
\nonumber \\
& = & \frac{1}{c} \sum_{x=0}^\infty V(x) e^{\tau g(x)} \varphi \left( \tau \Delta g(x) \right), \label{eq:Htbounds}
\end{eqnarray}
where $\varphi(u) = u e^u - e^u + 1 \geq 0$ and $\Delta g(x) = g(x+1) - g(x)$.

Since $\varphi'(u) = u e^u$, which has the same sign as $u$, we know that taking $\tau \geq 0$ and
  for $v \in (-\tau, \tau)$, the
$\varphi( v) \leq \max \left( \varphi(\tau), \varphi(- \tau) \right) = \varphi(\tau)$, where this last inequality 
follows since $\varphi(v) - \varphi(-v)$ is increasing on $v \geq 0$, and hence is $\geq 0$. Using this,
we can rewrite \eqref{eq:Htbounds} in the form $ \tau^2 G(\tau) H'(\tau) \leq  \frac{\varphi(\tau)}{c} G(\tau)$, which
we can integrate to deduce that for any $\sigma \geq 0$:
\begin{equation} \label{eq:lsicompare}
H(\sigma) - H(0) = \int_0^\sigma H'(\tau) d \tau \leq \frac{1}{c} \int_0^\sigma \frac{\varphi(\tau)}{\tau^2} d \tau
= \frac{1}{c} \frac{ e^{\sigma} - \sigma - 1}{\sigma}. \end{equation}
This can be rearranged to give an upper bound on $G(\sigma)$.
As in \cite[Proposition 10]{bobkov3}, we can use a standard Chernoff bounding argument, based on the fact that $H(0) = \ep_V g$ and using Markov's inequality to deduce that
for any $\sigma > 0$:
\begin{eqnarray*}
V \left( \left\{ g \geq \ep_V g + t \right\} \right) \leq \frac{ \ep_V e^{\sigma g}}{e^{\sigma (\ep_V g + t)}} =
 \frac{G(\sigma)}{e^{\sigma (\ep_V g + t)}} \leq \exp \left( \frac{ e^\sigma - \sigma - 1}{c} - \sigma t \right).
\end{eqnarray*}
We  make the optimal choice of $\sigma$ here, that is $\sigma = \log(1+ c t)$, to deduce the result.
\end{proof}
Note this function $h$ commonly occurs in concentration of measure results in different settings, including
Bennett's inequality (see for example \cite[Theorem 9]{raginsky}), work of Houdr\'{e} and co-authors based on the `covariance method' (see for
example \cite[Eq. (1.6)]{houdre2}) and recent work on discrete random variables
 using a tail condition under coupling
\cite[Theorem 3.3]{cook2}.

\begin{remark}
Proposition \ref{prop:concmeas}
 shows that Theorem \ref{thm:lsi} can provide practical improvements to 
results of the form \eqref{eq:bltype}. To be specific, \cite[Proposition 10]{bobkov3} shows that if \eqref{eq:bltype} holds,
then, under the same condition on $\Delta g$:
\begin{equation} \label{eq:tailboundsbl}
V \left( \left\{ g \geq \ep_V g + t \right\} \right)  \leq \exp \left( - \frac{ k(c t)}{c} \right), \end{equation}
where $k(u) = u \log(1+u)/4$. Proposition \ref{prop:concmeas} therefore  strengthens \eqref{eq:tailboundsbl}
 under the $c$-log-concavity
condition, Condition \ref{cond:clc}, since $h(u) \geq 2 k(u)$ for all $u$.
This strengthening comes from the fact that the expression of \eqref{eq:lsicompare} 
is significantly smaller than the bound of $\frac{1}{2c} (e^{2 \sigma} - 1)$ which follows by  the
argument of \cite[Proposition 10]{bobkov3}. Note that \eqref{eq:lsicompare} is sharp, in the sense that equality holds when taking $V = \Pi_\lambda$ and $g (x) = x$,
as follows from the sharpness of Theorem \ref{thm:lsi} discussed in Remark \ref{rem:compare}.\ref{it:sharp}.
\end{remark}

\subsection{Decay of entropy and hypercontractivity }
We briefly discuss how the log-Sobolev inequality, Theorem \ref{thm:lsi}, implies further results for 
related processes, in a standard way.
Motivated by  the paper \cite{yu2}, which considered pure thinning, we consider probability measures evolving as the 
`death' part of the birth and death process. That is, for fixed $V$, we consider probability distributions such that:
\begin{eqnarray}
\frac{\partial}{\partial t} V_t(x) & = &  \alpha_t \left( V_t(x) -  V_t(x-1) \right), \label{eq:vbeh} \\
 \frac{\partial}{\partial t} p_t(x) & = & \alpha_t \left( \frac{V_t(x)}{V_t(x+1)} p_t(x+1) - \frac{V_t(x-1)}{V_t(x)} p_t(x) \right). 
\end{eqnarray}
\begin{proposition} \label{prop:decay}
If $V_t(x)$ satisfies $\EEt(x) \geq c_t$ for all $x$ then 
$$D( p_t \| V_t) \leq D(p \| V) \exp\left( - \int_0^t \alpha_s c_s ds \right).$$
\end{proposition}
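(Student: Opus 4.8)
The plan is to show that $\Phi(t) := D(p_t \| V_t)$ satisfies the differential inequality $\Phi'(t) \le -\alpha_t c_t\,\Phi(t)$; an integrating-factor argument (differentiate $\Phi(t)\exp\!\big(\int_0^t \alpha_s c_s\,ds\big)$ and check it is non-increasing, using $\alpha_t\ge0$) then gives $\Phi(t) \le \Phi(0)\exp\!\big(-\int_0^t \alpha_s c_s\,ds\big)$, which is the assertion since $p_0 = p$ and $V_0 = V$.

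First I would set $f_t(x) = p_t(x)/V_t(x)$ and note that, since $\sum_x V_t(x) f_t(x) = \sum_x p_t(x) = 1$, we have $\mu_{V_t,f_t} = 1$ and hence $\Phi(t) = \Ent_{V_t}(f_t)$ — the identification of relative entropy with an $\Ent$-functional already used in Remark \ref{rem:compare}. Next I would differentiate $\Phi(t) = \sum_x p_t(x)\log p_t(x) - \sum_x p_t(x)\log V_t(x)$ through both $p_t$ and $V_t$; using $\sum_x \partial_t p_t(x) = 0$ this gives
\[ \Phi'(t) = \sum_{x=0}^\infty \partial_t p_t(x)\,\log f_t(x) \;-\; \sum_{x=0}^\infty p_t(x)\,\frac{\partial_t V_t(x)}{V_t(x)} . \]
Into the first sum I substitute the evolution equation for $p_t$ and relabel the summation index (the manoeuvre used repeatedly from Lemma \ref{lem:Laction} onward), using $\frac{V_t(x)}{V_t(x+1)}p_t(x+1) = V_t(x) f_t(x+1)$, to get $-\alpha_t\sum_x V_t(x) f_t(x+1)\log(f_t(x+1)/f_t(x))$; into the second I substitute \eqref{eq:vbeh}, relabel, and use $\sum_x V_t(x) = \sum_x V_t(x) f_t(x) = 1$ to get $\alpha_t\sum_x V_t(x)(f_t(x) - f_t(x+1))$. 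Combining the two pieces,
\[ \Phi'(t) = -\,\alpha_t\sum_{x=0}^\infty V_t(x)\, f_t(x+1)\left(\log\frac{f_t(x+1)}{f_t(x)} - 1 + \frac{f_t(x)}{f_t(x+1)}\right), \]
i.e.\ $-\alpha_t$ times the sum that appears (before division by $c$) on the right-hand side of \eqref{eq:lsi}, evaluated at $V = V_t$ and $f = f_t$. Since $\EEt(x) \ge c_t$ for all $x$ says precisely that $V_t$ is $c_t$-log-concave, Theorem \ref{thm:lsi} applied to $V_t$ and $f_t$ gives $\Ent_{V_t}(f_t) \le \frac{1}{c_t}\sum_x V_t(x) f_t(x+1)\big(\log(f_t(x+1)/f_t(x)) - 1 + f_t(x)/f_t(x+1)\big)$; rearranging and recalling $\Phi(t) = \Ent_{V_t}(f_t)$, the sum in the last display is $\ge c_t\,\Phi(t)$, so $\Phi'(t) \le -\alpha_t c_t\,\Phi(t)$ as desired.

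The algebra here is just the index-shifting already carried out in Lemma \ref{lem:Laction} and Proposition \ref{prop:clogcgivesdbec}; the only genuine work is the analytic bookkeeping. One must justify differentiating term-by-term in the computation of $\Phi'(t)$, and check that at every $t$ the inputs are admissible for Theorem \ref{thm:lsi}: namely that $V_t$ remains a probability mass function supported on all of $\Z_+$, and that $p_t$ — hence $f_t$ — stays strictly positive with finite entropy (as in the thinning example $V = \Pi_\lambda$, $V_t = \Pi_{\lambda_t}$, that motivates this subsection via \cite{yu2}). It is also worth recording the standing assumption $\alpha_t \ge 0$, without which neither the monotone decay nor the invocation of Theorem \ref{thm:lsi} with constant $c_t$ along the whole trajectory would be legitimate.
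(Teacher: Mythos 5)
Your proposal is correct and is essentially the paper's own argument: the paper likewise differentiates $D(p_t\|V_t)$ through both $p_t$ and $V_t$, identifies the derivative as $-\alpha_t$ times the right-hand-side functional of \eqref{eq:lsi} (phrased there via $K_t$ and $\psh_t$ and the restated form \eqref{eq:lsirestated}, whereas you work directly with $f_t = p_t/V_t$), applies the log-Sobolev inequality for $V_t$, and integrates the resulting differential inequality. The only blemish is a sign slip in your verbal description of the second piece (it should be $\alpha_t\sum_x V_t(x)\bigl(f_t(x+1)-f_t(x)\bigr)$); your final displayed formula for $\Phi'(t)$ is nonetheless correct, and your explicit flagging of $\alpha_t\ge 0$ and of the admissibility of $(V_t,f_t)$ for Theorem \ref{thm:lsi} is a reasonable addition.
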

\begin{proof}
Writing $K_t = \left( \sum_{x=0}^\infty p_t(x+1) V_t(x)/V_t(x+1) \right)^{-1}$ and
$\psh_t(x) = K_t p_t(x+1) V_t(x)/V_t(x+1)$ relabelling gives:
\begin{eqnarray*}
\lefteqn{ \frac{\partial}{\partial t} D( p_t \| V_t) } \\
& = &  \alpha_t
\sum_{x=0}^\infty \left( \frac{V_t(x)}{V_t(x+1)} p_t(x+1) - \frac{V_t(x-1)}{V_t(x)} p_t(x) \right) \log \left( \frac{p_t(x)}{V_t(x)} \right)
-   \frac{V_t(x) -  V_t(x-1)}{V_t(x)} p_t(x) \\
& = & \alpha_t \left( \sum_{x=0}^\infty \frac{ V_t(x) p_t(x+1)}{V_t(x+1)} \log \left( \frac{ p_t(x) V_t(x+1)}{V_t(x) p_t(x+1)} \right) - 1 + \frac{1}{K_t} \right) \\
& = & - \frac{\alpha_t}{K_t} \left( D( \psh_t \| p_t) + \log \frac{1}{K_t} - 1 + K_t \right) \\
& \leq & - \alpha_t c_t D(p_t \| V_t),
\end{eqnarray*}
where the last inequality follows using the form of the log-Sobolev inequality given
by Equation \eqref{eq:lsirestated}.
\end{proof}
\begin{example} \label{ex:thin}
 Taking $V_t = \Pi_{\lambda(t)}$, a Poisson mass function with mean $\lambda(t)  = \lambda e^{-t}$,
then \eqref{eq:vbeh} holds with $\alpha_t = \lambda(t)$, and we know that $c_t = 1/\lambda(t)$. Hence, $P_t$ becomes
the mass function $P$ thinned by $e^{-t}$ (see \cite{johnson24} for a discussion of this operation), and we can deduce that
\begin{equation} \label{eq:thind} D( p_t \| V_t) \leq D(p \| V) e^{-t}. \end{equation}
\end{example}
We also illustrate Theorem \ref{thm:lsi} by using it to prove a form of hypercontractivity, using a standard argument
(see for example \cite[Theorem 11]{bobkov11} and \cite[Page 246]{bakry2}).
\begin{proposition} \label{prop:hyper}
Consider a sequence of probability measures evolving as in \eqref{eq:vbeh} and a sequence of functions evolving in a related
way:
\begin{eqnarray}
\frac{\partial}{\partial t} V_t(x) & = &  \alpha_t \left( V_t(x) -  V_t(x-1) \right),  \nonumber \\
\frac{\partial}{\partial t} g_t(x) & = & \alpha_t \frac{V_t(x-1)}{V_t(x)} \left( g_t(x) - g_t(x-1) \right).  \label{eq:funcevol2}
\end{eqnarray}
If $V_t$ satisfies the new modified log-Sobolev inequality, Equation \eqref{eq:lsi} with constant $c_t$ then 
writing $q(t) = p \exp\left( - \int_0^t \alpha_s c_s ds \right)$ and $ \| f \|_{U,p} = \left( \sum_{x=0}^\infty U(x) f(x)^p \right)^{1/p}$ then
\begin{equation} \label{eq:hyper}
\| \exp(g ) \|_{V,p} \leq \|  \exp(g_t) \|_{V_t,q(t)}. \end{equation}
(Note that $q(t) \leq p$).
\end{proposition}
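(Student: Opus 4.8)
The plan is to follow the standard Gross--Bakry--Ledoux argument (see \cite[Theorem 11]{bobkov11} and \cite[Page 246]{bakry2}) and show that
$$N(t) := \| \exp(g_t) \|_{V_t,q(t)} = \left( \sum_{x=0}^\infty V_t(x) \exp\bigl( q(t) g_t(x) \bigr) \right)^{1/q(t)}$$
is non-decreasing in $t$. Since $q(0) = p$, $V_0 = V$ and $g_0 = g$, this immediately yields $N(0) = \| \exp(g) \|_{V,p} \le N(t)$, which is \eqref{eq:hyper}.

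First I would set $F(t) = \sum_{x=0}^\infty V_t(x) \exp\bigl( q(t) g_t(x) \bigr)$, so that $\log N(t) = q(t)^{-1} \log F(t)$, and differentiate to obtain $q(t)^2 F(t) \frac{d}{dt}\log N(t) = -q'(t) F(t)\log F(t) + q(t) F'(t)$. I would then expand $F'(t)$ using the evolution equations, $\partial_t V_t(x) = \alpha_t(V_t(x)-V_t(x-1))$ and $\partial_t g_t(x) = \alpha_t \frac{V_t(x-1)}{V_t(x)}(g_t(x)-g_t(x-1))$. The contribution of differentiating the exponent is $q(t)q'(t)\sum_x V_t(x) g_t(x) e^{q(t)g_t(x)}$, which (via $q'(t) = -\alpha_t c_t q(t)$) I rewrite as $q'(t)\bigl(\Ent_{V_t}(e^{q(t)g_t}) + F(t)\log F(t)\bigr)$; the $F\log F$ terms cancel, leaving
$$q(t)^2 F(t)\frac{d}{dt}\log N(t) = q(t)\alpha_t\sum_x\bigl(V_t(x)-V_t(x-1)\bigr)e^{q(t)g_t(x)} + q'(t)\,\Ent_{V_t}\!\bigl(e^{q(t)g_t}\bigr) + q(t)^2\alpha_t\sum_x V_t(x-1)\bigl(g_t(x)-g_t(x-1)\bigr)e^{q(t)g_t(x)}.$$

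The only inequality used is the log-Sobolev inequality \eqref{eq:lsi} for $V_t$ with constant $c_t$, applied to $f = \exp(q(t)g_t)$: since $q'(t) = -\alpha_t c_t q(t) \le 0$, multiplying the inequality by $q'(t)$ reverses it, and using $q'(t)/c_t = -\alpha_t q(t)$ gives
$$q'(t)\,\Ent_{V_t}\!\bigl(e^{q(t)g_t}\bigr) \ge -\alpha_t q(t)\sum_x V_t(x)\, e^{q(t)g_t(x+1)}\left(q(t)\bigl(g_t(x+1)-g_t(x)\bigr) - 1 + e^{-q(t)(g_t(x+1)-g_t(x))}\right).$$
Substituting this, dividing through by $q(t) > 0$, relabelling $x \mapsto x+1$ in the two sums carrying the weight $V_t(x-1)$ so that every term acquires the weight $V_t(x)$, and collecting, the summand reduces to $e^{q(t)g_t(x)} - e^{q(t)g_t(x+1)}e^{-q(t)(g_t(x+1)-g_t(x))} = 0$: the terms linear in $g_t(x+1)-g_t(x)$ cancel and the remaining exponential telescopes. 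Hence $\frac{d}{dt}\log N(t) \ge 0$, so $N$ is non-decreasing and the proposition follows.

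I expect the main difficulty to be purely bookkeeping rather than analytic: there is no delicate estimate, and the entire content is the \emph{exact} algebraic cancellation in the last step, which works because the function dynamics \eqref{eq:funcevol2} are tuned precisely to the measure dynamics \eqref{eq:vbeh} and to the shape of the right-hand side of \eqref{eq:lsi} --- replacing $\partial_t g_t$ by anything coarser would leave a nonzero, wrong-signed remainder. The only other points to address are differentiation under the infinite sums and positivity of $F(t)$, which I would either state as regularity hypotheses on $g$ or handle by restricting to a convenient class of functions, in the spirit of Remark \ref{rem:general}.
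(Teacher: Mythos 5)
Your proposal is correct and is essentially the paper's own argument: both define the functional $\Lambda(q,t)=\sum_x V_t(x)\exp(qg_t(x))$ (your $F(t)=\Lambda(q(t),t)$), apply \eqref{eq:lsi} to $f=\exp(qg_t)$, and use $q'(t)/q(t)=-c_t\alpha_t$ to show that $\log\Lambda(q(t),t)/q(t)$ is non-decreasing, the exact cancellation you identify being precisely the paper's observation that $\partial_t\Lambda$ equals $\alpha_t c_t$ times the right-hand side of the log-Sobolev inequality. The only cosmetic difference is that the paper first derives the two-variable differential inequality \eqref{eq:logderivative} and then evaluates it along the curve $(q(t),t)$, whereas you differentiate along the curve directly.
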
 
\begin{proof} As in \cite{bakry2, bobkov11}, we consider the functional
$ \Lambda(q,t) := \sum_{x=0}^\infty V_t(x) \exp( q g_t(x)).$
The key is to  express
\begin{eqnarray}
\frac{\partial}{\partial q} \Lambda(q,t) 
& = & \sum_{x=0}^\infty V_t(x) \exp(q g_t(x)) g_t(x) \nonumber \\
& = & \frac{1}{q} \Ent_{V_t}( \exp(q g_t) ) + \frac{1}{q}  \Lambda(q,t) \log \Lambda(q,t) \nonumber 
\end{eqnarray}
and using \eqref{eq:vbeh} and \eqref{eq:funcevol2} to recognise that $\frac{\partial}{\partial t} \Lambda(q,t)$ equals
$$
 \alpha_t \left[ \sum_{x=0}^\infty V_t(x+1)  \left( q \exp(q g_t(x+1)) \left(  g_t(x+1) - g_t(x) \right)
- \exp( q g_t(x+1))  + \exp(q g_t(x))  \right) \right].$$
Taking $f=\exp( q g_t)$ in  \eqref{eq:lsi} we deduce that
\begin{equation} \label{eq:logderivative}
- \partial_q \log \Lambda(q,t) + \frac{1}{\alpha_t c_t q} \partial_t \log \Lambda(q,t)
+ \frac{1}{q} \log \Lambda(q,t) \geq 0.
\end{equation}
Using this, we can consider the behaviour of $u(t) := \log \Lambda( q(t), t)/q(t)$. Taking a derivative with respect to $t$,
using the fact that $q'(t)/q(t) = - c_t \alpha_t$,  we 
obtain that
\begin{eqnarray*}
u'(t) & = & \frac{ q'(t)}{q(t)} \partial_q \log \Lambda(q(t), t) + \frac{1}{q(t)} \partial_t \log \Lambda(q(t),t) - \frac{q'(t)}{q(t)^2}
\log \Lambda(q(t),t) \\
& = & c_t \alpha_t \left( - \partial_q \log \Lambda(q(t), t) + \frac{1}{q(t) c_t \alpha_t} \partial_t \log \Lambda(q(t),t)
+ \frac{1}{q(t)} \log \Lambda(q(t),t) \right) \\
& \geq & 0,
\end{eqnarray*}
where the final inequality follows from \eqref{eq:logderivative}. Since $\exp( u(t)) = \| \exp(g_t) \|_{V_t,q(t)}$, we deduce that the
$q$ norm is increasing as required.
\end{proof}

Note that the definition of $q(t)$  involves the same  exponential expression as Proposition \ref{prop:decay}.

\begin{example}
As in Example \ref{ex:thin}, 
we can consider $V_t = \Pi_{\lambda(t)}$, a Poisson mass function with mean $\lambda(t)  = \lambda e^{-t}$,
and notice that \eqref{eq:funcevol2} is satisfied by the Poisson-Charlier polynomials $c_k(x; \lambda(t))$. Further, since
$c_t \alpha_t = 1$, we take $q(t) = p \exp(-t)$.

We deduce that Proposition \ref{prop:hyper} is sharp, taking $g_t = (x - \lambda(t))/\lambda(t)$ to be the Poisson--Charlier
polynomial of degree 1.  In this case
\begin{eqnarray*}
 \Lambda(q(t), t) & = & \sum_{x=0}^\infty \frac{ \exp(-\lambda(t) \lambda(t)^x}{x!} \exp \left( \frac{ q(t) (x - \lambda(t))}{\lambda(t)} \right) \\ 
& = &  \exp( - q(t) - \lambda(t) + \lambda(t) e^{q(t)/\lambda(t)} )
= \exp( - q(t) C),
\end{eqnarray*}
where $C = 1 + \lambda/p - \lambda/p e^{p/\lambda}$,
using the fact that $\lambda(t)/q(t) \equiv \lambda/p$. Hence the $q(t)$ norm is constant, and Proposition \ref{prop:hyper}
is sharp.
(This sharpness corresponds to the sharpness of the new modified log-Sobolev inequality for functions of the form 
$f(x) = \exp(a x + b)$, as discussed in Remark \ref{rem:compare}.\ref{it:sharp}).
\end{example}

\section{Extension to random variables on $\Z_+^d$}

It would be of considerable interest to extend this work to the more general setting of probability measures on graphs, where
curvature and related issues are topics of active research. For example, \cite{ane2} uses the
Bakry-\'{E}mery $\Gamma$-calculus to deduce log-Sobolev inequalities on the discrete cube, $\Z^d$ and general graphs of
uniformly bounded degree.  The paper \cite{klartag} defines curvature for discrete graphs, and shows that 
controlling this curvature allows results   including Poincar\'{e} and log-Sobolev inequalities to be deduced. Lin and Yau \cite{lin} compare the two forms of
curvature discussed by Joulin \cite{joulin}, in the context of graphs.
 
We briefly describe how the $c$-log-concavity condition, Condition \ref{cond:clc},  extends to
the setting of probability measures on $\Z_+^d$. We deduce an integrated Bakry-\'{E}mery condition, and hence a Poincar\'{e} 
inequality, and explain the issues with proving  a modified log-Sobolev inequality in the form of Theorem \ref{thm:lsi}.

Fix a reference measure $V(\vc{x})$ which is positive for all
$\vc{x} \in \Z_+^d$, and
write $\vc{e}_i$ for the $i$th unit vector. Further,  for all $1 \leq i, j \leq d$ and for a given function $f$
 we define
\begin{eqnarray}
\EEd{i}{j}(\vc{x}) & = &  \frac{ V( \vc{x} + \vc{e}_j - \vc{e}_i)}{V( \vc{x} + \vc{e}_j)}
- \frac{ V( \vc{x} - \vc{e}_i)}{V( \vc{x})},  \label{eq:eeij} \\
L_{ij} f(\vc{x}) & = &  f(\vc{x} + \vc{e}_j) - f(\vc{x} +  \vc{e}_j - \vc{e}_i) -   f(\vc{x}) + f(\vc{x} - \vc{e}_i).
\end{eqnarray}
Notice that these quantities are not symmetric in $i$ and $j$, however  for each $\vc{y}$ and each $c$ we can define a symmetric matrix by
\begin{eqnarray} 
\EEsym{i}{j}(\vc{y})
& := & \frac{ V(\vc{y} - \vc{e}_i)  V(\vc{y} - \vc{e}_j)}{V(\vc{y})} -  V(\vc{y} - \vc{e}_i - \vc{e}_j) - c \II(i =j ) V(\vc{y} - \vc{e}_j). \label{eq:Edef} \\
 & = & V( \vc{y} - \vc{e}_j) \left( \EEd{i}{j}(\vc{y} - \vc{e}_j) - c \II(i = j) \right) \nonumber 
\end{eqnarray}
writing $\II(i = j)$ for the entries of the identity matrix.
Consider a  process which (for all $\vc{x}$ and all $i$) 
jumps from $\vc{x}$  to $\vc{x} + \vc{e}_i$ at rate 1 and
from $\vc{x}$ to $\vc{x} - \vc{e}_i$ at rate $V(\vc{x} - \vc{e}_i)/V(\vc{x})$ (where by convention $V(\vc{y}) = 0$ if any component of
$\vc{y}$ is $-1$). This corresponds to defining 
\begin{equation} \label{eq:lvd}
L_V f(\vc{x}) = \sum_{i=1}^d \left( f( \vc{x} + \vc{e}_i) - f(\vc{x}) \right) - \frac{V(\vc{x} - \vc{e}_i)}{V(\vc{x})}
\left( f(\vc{x}) - f(\vc{x} - \vc{e}_i) \right)
\end{equation}
The key is to observe that an analogue of \eqref{eq:diffLV} holds, that is direct calculation gives that for any $j$:
\begin{eqnarray} \label{eq:diffLVd}
\lefteqn{L_V f( \vc{x}+ \vc{e}_j) - L_V f( \vc{x})}  \nonumber \\ 
& = & \sum_{i=1}^d \left(
 L_{ij} f( \vc{x}+ \vc{e}_i)  - L_{ij} f(\vc{x}) \frac{V( \vc{x}- \vc{e}_i)}{V(\vc{x})}  - \EEd{i}{j}(\vc{x}) 
\left( f( \vc{x} + \vc{e}_j) - f(\vc{x} + \vc{e}_j - \vc{e}_i) \right) \right).\;\;\;\;
\end{eqnarray}
We deduce that:
\begin{proposition} \label{prop:higherdim}
If for some $c$, the matrix $\EEsym{ }{}(\vc{y})$  of Equation \eqref{eq:Edef} is positive definite for all $\vc{y}$, then
 for any function $f$:
\begin{equation} \label{eq:dbecd}
\sum_{\vc{x} \in \Z_+^d} V(\vc{x}) \Gam{2}(f,f)(\vc{x}) \geq c \sum_{\vc{x} \in \Z_+^d} V(\vc{x}) \Gam{1}(f,f)(\vc{x}).
\end{equation}
\end{proposition}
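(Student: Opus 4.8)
The plan is to repeat the computation behind Proposition~\ref{prop:clogcgivesdbec}, now carrying the extra index $j$, so that the scalar curvature $\EE(x)$ is replaced by the matrix $\EEsym{}{}(\vc{y})$ of \eqref{eq:Edef}. Write $\Delta_i f(\vc{x}) = f(\vc{x}+\vc{e}_i) - f(\vc{x})$ and let $\Gam{1},\Gam{2}$ be the operators induced by the $L_V$ of \eqref{eq:lvd} as in Definition~\ref{def:gamv1}. First I would record the $d$-dimensional analogue of Lemma~\ref{lem:Laction}: $L_V$ is self-adjoint with respect to $V$, $\sum_{\vc{x}} V(\vc{x}) L_V h(\vc{x}) = 0$, and $\sum_{\vc{x} \in \Z_+^d} V(\vc{x}) \Gam{1}(f,g)(\vc{x}) = \sum_{i=1}^d \sum_{\vc{x}} V(\vc{x}) \Delta_i f(\vc{x}) \Delta_i g(\vc{x})$, all by the same index shifts used for \eqref{eq:gamv1sum} (applied in each coordinate $i$). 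By the same self-adjointness argument as in one dimension the two cross terms in the definition of $\Gam{2}$ integrate against $V$ to the same quantity, so $\sum_{\vc{x}} V(\vc{x}) \Gam{2}(f,f)(\vc{x}) = -\sum_{j=1}^d \sum_{\vc{x}} V(\vc{x}) \Delta_j f(\vc{x}) \bigl( L_V f(\vc{x}+\vc{e}_j) - L_V f(\vc{x}) \bigr)$.

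The heart of the proof is then to substitute the identity \eqref{eq:diffLVd} for $L_V f(\vc{x}+\vc{e}_j) - L_V f(\vc{x})$, which breaks the sum into three groups. The two $L_{ij}$ groups, after shifting $\vc{x}\mapsto\vc{x}+\vc{e}_i$ in the one carrying the weight $V(\vc{x}-\vc{e}_i)/V(\vc{x})$, collapse via the elementary identity $\Delta_j f(\vc{x}+\vc{e}_i) - \Delta_j f(\vc{x}) = L_{ij} f(\vc{x}+\vc{e}_i)$ into the single nonnegative term $\sum_{i,j}\sum_{\vc{x}} V(\vc{x}) \bigl(L_{ij} f(\vc{x}+\vc{e}_i)\bigr)^2$, exactly as the $Lf(x+1)$ and $Lf(x)$ terms combined in Proposition~\ref{prop:clogcgivesdbec}. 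The remaining group, $\sum_{i,j}\sum_{\vc{x}} V(\vc{x})\EEd{i}{j}(\vc{x})\Delta_j f(\vc{x})\bigl(f(\vc{x}+\vc{e}_j)-f(\vc{x}+\vc{e}_j-\vc{e}_i)\bigr)$, I would re-index by $\vc{y}=\vc{x}+\vc{e}_j$ to obtain $\sum_{i,j}\sum_{\vc{y}} V(\vc{y}-\vc{e}_j)\EEd{i}{j}(\vc{y}-\vc{e}_j)\, v_i(\vc{y})\, v_j(\vc{y})$, where $v_i(\vc{y}):=f(\vc{y})-f(\vc{y}-\vc{e}_i)$.

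Now the second line of \eqref{eq:Edef} reads $V(\vc{y}-\vc{e}_j)\EEd{i}{j}(\vc{y}-\vc{e}_j) = \EEsym{i}{j}(\vc{y}) + c\,\II(i=j)\,V(\vc{y}-\vc{e}_j)$, so this group splits into $\sum_{\vc{y}} \vc{v}_{\vc{y}}^{\top}\EEsym{}{}(\vc{y})\vc{v}_{\vc{y}}$ plus $c\sum_i\sum_{\vc{y}} V(\vc{y}-\vc{e}_i)v_i(\vc{y})^2$; shifting $\vc{y}\mapsto\vc{x}+\vc{e}_i$ identifies the latter with $c\sum_{\vc{x}} V(\vc{x})\Gam{1}(f,f)(\vc{x})$ by the first step, while the former is $\geq 0$ since $\EEsym{}{}(\vc{y})$ is positive definite. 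Collecting the three groups gives $\sum_{\vc{x}} V(\vc{x})\Gam{2}(f,f)(\vc{x}) = \sum_{i,j}\sum_{\vc{x}} V(\vc{x})\bigl(L_{ij}f(\vc{x}+\vc{e}_i)\bigr)^2 + \sum_{\vc{y}} \vc{v}_{\vc{y}}^{\top}\EEsym{}{}(\vc{y})\vc{v}_{\vc{y}} + c\sum_{\vc{x}} V(\vc{x})\Gam{1}(f,f)(\vc{x})$, which is $\geq c\sum_{\vc{x}} V(\vc{x})\Gam{1}(f,f)(\vc{x})$, i.e.\ \eqref{eq:dbecd}. (As a sanity check, for $d=1$ this reduces to the decomposition $\sum_x V(x)\GamV{2}(f,f)(x) = \sum_x V(x)Lf(x+1)^2 + \sum_x V(x)\EE(x)\Delta f(x)^2$ of Proposition~\ref{prop:clogcgivesdbec}.)

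The work is essentially all bookkeeping, and that is where the only real difficulty lies: keeping the non-symmetric roles of $i$ and $j$ straight through the index shifts, verifying that the two $L_{ij}$ groups genuinely close up into one sum of squares, and checking that the diagonal $i=j$ contribution of the curvature term is precisely the piece subtracted off in \eqref{eq:Edef} (which also makes $\EEsym{}{}$ symmetric). I would also note that only positive semidefiniteness of $\EEsym{}{}(\vc{y})$ is actually used in this argument; the strict definiteness stated in the hypothesis is more than \eqref{eq:dbecd} needs, and the genuine obstruction — to an accompanying log-Sobolev inequality in the form of Theorem~\ref{thm:lsi} — lies elsewhere, in the failure of the product/chain rule discussed in Remark~\ref{rem:prodrule}.
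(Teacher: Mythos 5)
Your proposal is correct and follows essentially the same route as the paper: reduce $\sum V\,\Gam{2}$ to $-\sum_{j,\vc{x}} V(\vc{x})\Delta_j f(\vc{x})\bigl(L_Vf(\vc{x}+\vc{e}_j)-L_Vf(\vc{x})\bigr)$, substitute \eqref{eq:diffLVd}, collapse the two $L_{ij}$ groups into a sum of squares, and re-index the curvature group by $\vc{y}=\vc{x}+\vc{e}_j$ so that \eqref{eq:Edef} splits it into the $\EEsym{}{}$ quadratic form plus $c\sum V\,\Gam{1}(f,f)$. Your side remark that positive semidefiniteness suffices is also accurate.
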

\begin{proof}
First we observe  that (by relabelling)
\begin{eqnarray} \sum_{\vc{x} \in \Z_+^d} V(\vc{x}) \Gam{1}(f,g)(\vc{x}) & = & \sum_{\vc{x} \in \Z_+^d} V(\vc{x}) \sum_{j=1}^d \left( f(\vc{x} + \vc{e}_j) - f(\vc{x}) \right)
\left( g(\vc{x} + \vc{e}_j) - g(\vc{x}) \right) \nonumber \\
& = & \sum_{j=1}^d \sum_{\vc{y} \in \Z_+^d} V(\vc{y} - \vc{e}_j)  \left( f(\vc{y}) - f(\vc{y}- \vc{e}_j) \right)
\left( g(\vc{y}) - g(\vc{y} - \vc{e}_j) \right).\;\; \label{eq:gamma1}
\end{eqnarray}
Using this, we can deduce a $d$-dimensional version of \eqref{eq:done}, namely
\begin{eqnarray} 
\lefteqn{ \sum_{\vc{x} \in \Z_+^d} V(\vc{x}) \Gam{2}(f,g)(\vc{x}) } \nonumber \\
 & = & - \sum_{\vc{x} \in \Z_+^d} V(\vc{x})
 \sum_{j=1}^d \left( L_V f(\vc{x} + \vc{e}_j) - L_V f(\vc{x}) \right)
\left( g(\vc{x} + \vc{e}_j) - g(\vc{x}) \right) \nonumber \\
& = & \sum_{\vc{x} \in \Z_+^d} V(\vc{x}) \sum_{i,j} 
L_{ij} f(\vc{x} + \vc{e}_i) L_{ij} g(\vc{x} + \vc{e}_i) \nonumber \\
& & 
+ \sum_{\vc{x} \in \Z_+^d} V(\vc{x}) \sum_{i=1}^d \sum_{j=1}^d \EEd{i}{j}(\vc{x})
( f( \vc{x}+\vc{e}_j) - f(\vc{x} + \vc{e}_j - \vc{e}_i )) (g( \vc{x} + \vc{e}_j) - g(\vc{x}) )   \nonumber \\
& \geq & \sum_{i=1}^d \sum_{j=1}^d \sum_{\vc{y} 
 \in \Z_+^d} V(\vc{y} - \vc{e}_j)  \EEd{i}{j}(\vc{y} - \vc{e}_j)  ( f( \vc{y}) - f(\vc{y}  - \vc{e}_i )) (g( \vc{y}) - g(\vc{y} - \vc{e}_j) ), \label{eq:doned}
\end{eqnarray}
so taking subtracting $c$ times \eqref{eq:gamma1} from \eqref{eq:doned} and taking $f=g$ we obtain that  
\begin{eqnarray*}
\lefteqn{
\sum_{\vc{x} \in \Z_+^d} V(\vc{x}) \Gam{2}(f,f)(\vc{x}) - c
\sum_{\vc{x} \in \Z_+^d} V(\vc{x}) \Gam{1}(f,f)(\vc{x}) } \\
& \geq & \sum_{\vc{y}  \in \Z_+^d} \left[
\sum_{i=1}^d \sum_{j=1}^d  \EEsym{i}{j}(\vc{y}) (f( \vc{y}) - f(\vc{y} - \vc{e}_j) )
( f( \vc{y}) - f(\vc{y}  - \vc{e}_i )) \right],
\end{eqnarray*}
and the term in square brackets is positive for each $\vc{y}$, by positive-definiteness.
\end{proof}

\begin{remark} \label{rem:independent}
If $V(\vc{x}) = \prod_{k=1}^d V_k(x_k)$ is formed as the product of independent measures 
in each coordinate, then $\EEsym{i}{j}(\vc{y}) \equiv 0$ for $i \neq j$. Further, if each $V_i$ is $c$-log-concave then each entry $\EEsym{i}{i}(\vc{y}) = V_i(y_i-1) \left( \EEval{V_i}(y_i-1) -  c \right) \geq 0$, so
the condition of Proposition \ref{prop:higherdim} is satisfied. This mirrors the tensorization result of 
\cite[Theorem 1.3]{erbar}, which was used  to prove a sharp bound on the Ricci curvature for the hypercube $\{ 0,1 \}^d$.
\end{remark}

Hence, repeating the proof of Theorem \ref{thm:poincare}, we can deduce that the positive definiteness of $\EEsym{ }{ }(\vc{y})$ for
all $\vc{y}$ is enough to imply that a $d$-dimensional Poincar\'{e} inequality holds with constant $ \leq 1/c$.

A more detailed argument shows that many of the arguments used in Section \ref{sec:prooflsi} to prove the new modified
log-Sobolev inequality Theorem \ref{thm:lsi} carry over. That is, we consider functions $f_t(\vc{x})$ evolving as 
$\frac{\partial}{\partial t} f_t(\vc{x}) = L_V f_t(\vc{x})$, for $L_V$ as defined in \eqref{eq:lvd}.
Again, taking $
\Theta(t) := \sum_{\vc{x} \in \Z_+^d}  V(\vc{x}) f_t(\vc{x}) \log f_t(\vc{x})$, we obtain that
\begin{eqnarray}
\Theta'(t) 
& = & - \sum_{\vc{x} \in \Z_+^d}  V(\vc{x}) \sum_{j=1}^d \left( f_t( \vc{x}+ \vc{e}_j) - f_t(\vc{x}) \right) 
\left( \log f_t( \vc{x}+ \vc{e}_j) - \log f_t(\vc{x}) \right).
\end{eqnarray}
Similarly, writing
\begin{eqnarray}
\psi(t) 
& := & \sum_{\vc{x} \in \Z_+^d}  V(\vc{x}) \sum_{j=1}^d \left( f_t( \vc{x}+ \vc{e}_j)  \log \left( \frac{ 
 \log f_t( \vc{x}+ \vc{e}_j)}{ f_t(\vc{x}) } \right) - f_t(\vc{x} + \vc{e}_j) + f_t( \vc{x}) \right), \;\;\;
\end{eqnarray}
an involved analysis using the expressions above shows that
\begin{eqnarray*}
\psi'(t)
& = & \sum_{\vc{x} \in \Z_+^d}  V(\vc{x}) \sum_{i,j=1}^d f(\vc{x} + \vc{e}_j) w \left( U_{ij}(\vc{x}), s_i(\vc{x}) \right) \\
&  & - \sum_{\vc{x} \in \Z_+^d}  V(\vc{x}) \sum_{i,j=1}^d \EEd{i}{j}(\vc{x})
\left( f_t( \vc{x}+ \vc{e}_j) - f_t(\vc{x} + \vc{e}_j - \vc{e}_i) \right) 
\left( \log f_t( \vc{x}+ \vc{e}_j) - \log f_t(\vc{x}) \right).
\end{eqnarray*}  
where as before 
$w(U; s) = -(U/s - 1) \log U + (1-U)(1- 1/s) \geq 0$
and we write
 $U_{ij}(\vc{x}) = f(\vc{x}) f(\vc{x} + \vc{e}_i + \vc{e}_j)/(f(\vc{x} + \vc{e}_i)
f(\vc{x} + \vc{e}_j))$ and $s_i(\vc{x}) = f(\vc{x})/f(\vc{x}+\vc{e}_i)$.
We deduce that 
\begin{equation}
c \Theta'(t) - \psi'(t) =  
\sum_{\vc{y}  \in \Z_+^d} \left[
\sum_{i=1}^d \sum_{j=1}^d  \EEsym{i}{j}(\vc{y}) (f_t( \vc{y}) - f_t(\vc{y} - \vc{e}_j) )
( \log f_t( \vc{y}) - \log f_t(\vc{y}  - \vc{e}_i )) \right]. \label{eq:logsobdiff}
\end{equation}
Unfortunately, positive definiteness of $\EEsym{ }{ }$ is not sufficient to guarantee the  positivity of 
\eqref{eq:logsobdiff} required to deduce the log-Sobolev inequality.
 If (as in Remark \ref{rem:independent}) $V$ is the product of $c$-log-concave
mass functions, then $\EEsym{}{}$ becomes diagonal with positive entries. The positivity of 
\eqref{eq:logsobdiff} follows from the fact that $\log$ is a monotone function,  meaning that
$(f_t( \vc{y}) - f_t(\vc{y} - \vc{e}_j) )$
and $( \log f_t( \vc{y}) - \log f_t(\vc{y}  - \vc{e}_j ))$ have the same sign.

It remains an interesting problem to characterize probability mass functions on $\Z_+^d$ (and indeed for general graph settings)
for which some form of Theorem \ref{thm:lsi} holds.

\section*{Acknowledgments}
The author thanks the University of Bristol for funding to attend the conference `{\em When Dominique Bakry is 60}' 
at Universit\'{e} Paul Sabatier Toulouse in December 2014. He also thanks  
the Institute for Mathematics and Its Applications
for the invitation and funding 
to speak at the workshop `{\em Information Theory and Concentration Phenomena}'  in Minneapolis in April 2015.
Attending talks and having discussions with organisers and
 fellow participants at these meetings  greatly helped  in understanding 
the topics discussed here. Many calculations in this paper were first performed using Mathematica.
The author thanks the anonymous referees of this paper for making numerous extremely helpful suggestions.

\bibliography{../../bibliography/papers}

\end{document}